\newtheorem{theorem}{Theorem}[section]
\newtheorem{thm}[theorem]{Theorem}
\newtheorem{lemma}[theorem]{Lemma}
\newtheorem{lem}[theorem]{Lemma}
\newtheorem{proposition}[theorem]{Proposition}
\newtheorem{corollary}[theorem]{Corollary}
\theoremstyle{definition}
\newtheorem{definition}[theorem]{Definition}
\newtheorem{defn}[theorem]{Definition}
\theoremstyle{remark}
\newtheorem{remark}[theorem]{Remark}
\newtheorem{rem}[theorem]{Remark}
\numberwithin{equation}{section}
 \DeclareMathAlphabet{\mathpzc}{OT1}{pzc}{m}{it}
\def\al{\alpha}
 \newcommand{\EE}{\mathbb{E}}
 \newcommand{\RR}{\mathbb{R}}
 \newcommand{\PP}{\mathbb{P}}
 \newcommand{\HH}{\mathbb{H}}
 \newcommand{\Be}{\begin{equation}}
 \newcommand{\Ee}{\end{equation}}
 \newcommand{\Bs}{\begin{split}}
 \newcommand{\Es}{\end{split}}
  \newcommand{\Bes}{\begin{equation*}}
 \newcommand{\Ees}{\end{equation*}}
 \newcommand{\BT}{\begin{thm}}
 \newcommand{\ET}{\end{thm}}
 \newcommand{\Bp}{\begin{proof}}
 \newcommand{\Ep}{\end{proof}}
 \newcommand{\BL}{\begin{lem}}
 \newcommand{\EL}{\end{lem}}
 \newcommand{\BP}{\begin{proposition}}
 \newcommand{\EP}{\end{proposition}}
 \newcommand{\BC}{\begin{corollary}}
 \newcommand{\EC}{\end{corollary}}
 \newcommand{\BR}{\begin{rem}}
 \newcommand{\ER}{\end{rem}}
 \newcommand{\BD}{\begin{defn}}
 \newcommand{\ED}{\end{defn}}
 \newcommand{\BI}{\begin{itemize}}
 \newcommand{\EI}{\end{itemize}}
\begin{document}
\title
[Smooth densities for SDEs driven by SBM with Markovian switching]
{Smooth densities for SDEs driven by 
subordinate Brownian motion with Markovian switching}

\author[X. Sun ]{Xiaobin {\sc Sun}}
\address{School of Mathematics and Statistics\\Jiangsu Normal University\\
Xuzhou 221116, China}
\email{xbsun@jsnu.edu.cn}

\author[Y. Xie ]{Yingchao {\sc Xie}}
\address{School of Mathematics and Statistics\\Jiangsu Normal University\\
Xuzhou 221116, China}
\email{ycxie@jsnu.edu.cn}


\begin{abstract} \label{abstract}
In this paper we consider a class of stochastic differential equations driven by subordinate
Brownian motion with Markovian switching.  We use Malliavin calculus to study the smoothness of the density for the solution under uniform H\"ormander's type condition.
\end{abstract}

\maketitle

\section{Introduction}

In this paper we consider the following jump-diffusion with Markovian switching in $\RR^n$:
\begin{equation}
dX_t=b(X_t, \alpha_t)dt+\sigma dL_t,\ (X_0\,, \al_0)=(x,\al)\in \RR^n \times \mathbb{S} \,, \label{e.x}
\end{equation}
where $b: \RR^n\times \mathbb{S}\rightarrow \RR^n$ is 
a function satisfying certain functions to be specified below, 
$\sigma$ is a $n\times d$ constant matrix, $L_t$ is a
$d$-dimensional subordinated Brownian motion, $\mathbb{S}=\{1, 2, \ldots, m\}$ and  $\left\{\alpha_t\,, t\ge 0\right\}$ is a right-continuous
$\mathbb{S}$-valued Markov chain  described by
\begin{equation}
\mathbb{P}\{\alpha_{t+\Delta}=j|\alpha_{t}=i\}=\left\{\begin{array}{l}
\displaystyle q_{ij}\Delta+o(\Delta),\quad i\neq j\\
1+q_{ii}\Delta+o(\Delta),\quad i=j,\end{array}\right.
\label{alpha}
\end{equation}
and $Q= (q_{ij})_{1 \le i,j \le m}$ is a $Q$-matrix.

In recent years, there has been increasing interest in stochastic differential equations (SDEs) 
with Markovian switching. Among the properties studied are the existence and uniqueness of the solutions, the existence of the invariant measure and stability (see \cite{BBG, M, X, YZ, YM}).
However, the smoothness of the densities of the solutions to this kind of SDEs have not been studied much.
When the noise is Brownian motion, the smoothness of the densities of the solutions has been proved
under the uniform H\"{o}rmander's condition in \cite{HNSX}.

The main purpose of this paper is to study the smoothness of the density of the solution of equation (\ref{e.x}). 
For technical reasons, we only consider the additive noise here.
In order to show the smoothness of density for $X_t$, we need to develop Malliavin calculus for $X_t$ and show the Malliavin covariance matrix has all negative moments.
The difficulty here is the appearance of the switching term $\al_t$.
Our procedure is to follow the method in \cite{HNSX}, i.e., we will perform perturbations of the underlying Brownian motion, keeping the Markovian switching process $\alpha_t$ and the subordinator unperturbed. 
The technique for this analysis can be regarded as a stochastic calculus of variation for random variables with values in  a Hilbert space and is partly inspired by Malliavin calculus.

When the switching term $\alpha_t$ is not present, Kusuoka \cite{KS} proved the solution
has a smooth density under a nondegenerate condition on $\sigma$. Zhang \cite{Z1} established that the solution has a smooth density in a special degenerate case n \cite{Z1}  and under the uniform H\"{o}rmander's type condition in \cite{Z2}. When the switching term $\alpha_t$ is present, 
things are more complicated due the fact that $X_t$ depends on the jump process $\alpha_t$.
We will use a strategy inspired by \cite{FLT, HNSX}.
More precisely, we first notice that the jump times of $\al_t$ form a subset of the jump times of some Poisson process $N_t$,
independent of the driving subordinate Brownian motion $L_t$. Then conditioning on $N_t=k$, there exists a random interval
$[T_1, T_2)$ with $0\leq T_1< T_2\leq t$, such that $T_2-T_1\geq\frac{t}{k+1}$ and $\alpha_t=\alpha_{T_1}, t\in[T_1, T_2)$.
On this time interval, we will follow the procedure developed in \cite{Z2}. 
This requires a version of Norris' lemma developed in \cite{Z1, Z2} on time intervals, 
which is a key tool to show that the Malliavin covariance matrix has all negative moments,
which implies that the solution $X_t$ has a smooth density.

The paper is organized as follows. In the next section, we introduce  some notation
and  assumptions that we use throughout the paper. The Malliavin calculus for $X_t$ is developed in Section 3.
In  Section 4, we first develop a Norris' type lemma on time interval, then use it to show that, 
under a uniform H\"{o}rmander type condition, the Malliavin covariance matrix has all negative finite moments. Finally, we prove that $X_t$ has a smooth density by considering the small jumps and the large jumps separately.

In this paper,
$C$ will denote a generic constant which may vary from line to line and it might depend on $T$, the exponent $p\ge 2$, the initial condition $x$ and a fixed element $h\in H$ (the precise definition of $H$ is in Section 3).

\section{Preliminaries}

\begin{itemize}
\item{ Let $(\Omega_1, \mathcal{F}_1, \mathbb{P}_1)$ be the $d$-dimensional
canonical Wiener space.
That is,  $\Omega_1$  is  the set of all continuous maps $\omega_1:\RR_{+}\rightarrow \mathbb{R}^d$ such that
$\omega_1(0)=0$ and $\mathbb{P}_1$ is the canonical Wiener measure such that coordinate process
$$
W_t(\omega_1):=\omega_1(t)
$$
is a standard $d$-dimensional Brownian motion.}

\item{Let$(\Omega_2, \mathcal{F}_2, \mathbb{P}_2)$ be the space of all increasing, purely discontinuous and c\`{a}dl\`{a}g
functions from $\RR_{+}$ to $\RR_{+}$ with $\omega_2(0)=0$, which is endowed with the Skorohod metric and the
probability measure $\PP_2$ so that the coordinate process
$$
S_t(\omega_2):=\omega_2(t)
$$
is an increasing one dimensional L\'{e}vy process (called a subordinator) on $\RR_{+}$ with Laplace transform:
$$
\EE_2 e^{-s S_t}=\exp\left\{t\int^{\infty}_{0}(e^{-su}-1)\nu_S (du)\right\},
$$
where $\EE_2$ is the expectation with respect to $\PP_2$, $\nu_S$ is the L\'{e}vy measure satisfying $\nu_S(\{0\})=0$ and
$$
\nu_{S}((-\infty, 0])=0,\quad \int^{\infty}_{0}(1\wedge u)\nu_{S}(du)<\infty.
$$
}

\item{Let $(\Omega_3, \mathcal{F}_3, \mathbb{P}_3)$ be a complete probability space, on which $\left\{\alpha_t\,, t\ge 0\right\}$ is a right-continuous
$\mathbb{S}$-valued Markov chain satisfying (\ref{alpha}).
}
\end{itemize}

We will use $(\Omega,\mathcal{F}, \mathbb{P})$ to denote the product probability space
$(\Omega_1\times\Omega_2\times\Omega_3, \mathcal{F}_1\times\mathcal{F}_2\times\mathcal{F}_3,\mathbb{P}_1\times\mathbb{P}_2\times\PP_3)$.
We extend  $W_t$, $S_t$ and $\alpha_t$ to random variables on $\Omega$ by letting
$W_t(\omega)=\omega_{1}(t)$, $S_t(\omega)=\omega_2(t)$ and $\alpha_t(\omega)=\alpha_t(\omega_3)$, respectively, if $\omega=(\omega_1, \omega_2, \omega_3)$. Thus on $(\Omega,\mathcal{F}, \mathbb{P})$, $W_t$, $S_t$ and $\alpha_t$ are independent. We define
$$
L_t(\omega):=W_{S_t}(\omega)=\omega_1(\omega_2(t)).
$$
Then $(L_t)_{t\geq 0}$ is a L\'{e}vy process (called a subordinate Brownian motion) with characteristic function:
$$
\EE e^{i \langle z,L_t\rangle_{\RR^d}}=\exp\left\{t\int_{\RR^d}(e^{i\langle z,y\rangle_{\RR^d}}-1-i\langle z,y\rangle_{\RR^d}1_{|y|\leq 1})\nu_{L}(dy)\right\},
$$
where $\EE$ is the expectation with respect to $\PP$, $\nu_{L}$ is the L\'{e}vy measure given by
$$
\nu_{L}(\Gamma)=\int^{\infty}_{0}(2\pi s)^{-d/2}\left(\int_{\Gamma}e^{-\frac{|y|^2}{2s}}dy\right)\nu_{S}(ds),\quad \Gamma\in \mathcal{B}(\RR^d).
$$
Obviously, $\nu_{L}$ is a symmetric measure.

\vspace{3mm}

Let $\mathbb{S}=\{1, 2, \ldots,m\}$,  where $m$ is a given positive integer which will be fixed
throughout the paper. The matrix $Q=(q_{ij})$ is assumed to satisfy the following assumptions.
\begin{itemize}
\item[(i)] $q_{ij}\geq0$ for $i\neq j$,
\item[(ii)] $q_{ii}=-\sum_{j\neq i}q_{ij}$ for $i\in \mathbb{S}$,
\item[(iii)] $\sup_{i, j\in\mathbb{S}}|q_{ij}|:=K<\infty$.
\end{itemize}

It is well known (see \cite{BBG}) that the process $\left\{\al_t\,, 0\le t\le T\right\}$
can be described as follows.
Let $g: \mathbb{S}\times [0,
m(m-1)K]\rightarrow\mathbb{R}$ be defined by
\[
g(i, z)=\sum_{j\in \mathbb{S}\backslash i }(j-i)1_{z\in\triangle_{ij}},\quad \forall \ i\in \mathbb{S}\,,
\]
where $\triangle_{ij}$'s are the consecutive (with respect to the
lexicographic ordering on $\mathbb{S}\times\mathbb{S}$) left-closed,
right-open intervals of $\mathbb{R}_{+}$, each having length
$q_{ij}$, with $\Delta_{12}= [0,q_{12})$. Then,
(\ref{alpha}) can be rewritten as
\begin{equation} \label{2.3}
\alpha_t=\al+\int^t_0\int_{[0, m(m-1)K]}g(\alpha_{s-}, z)N(ds,dz),
\end{equation}
where $N(dt, dz)$ is a Poisson random measure defined on $\Omega\times\mathcal{B}(\mathbb{\RR_{+}})\times\mathcal{B}(\mathbb{\RR_{+}})$, whose intensity measure is the Lebesgue measure, and $N(dt, dz)$ is independent of $W_t$ and $S_t$.

\vspace{0.3cm}

For $k\in\mathbb{N}$ we denote by $C^{k}(\mathbb{R}^{n}\times\mathbb{S};\mathbb{R}^n)$
the family of all $\mathbb{R}^n$-valued functions $f(x, \alpha)$ on $\mathbb{R}^{n}\times\mathbb{S}$ which
are $k$-times continuously differentiable in $x$ for any $\alpha \in \mathbb{S}$.  The  $k$-th derivative tensor of
$f$ with respect to $x$ is denoted by $\nabla^k f(x, \alpha)$.

For $x\in \mathbb{R}^n$ and $\sigma\in\mathbb{R}^n\times\mathbb{R}^d$, we use the notation $|x|^2=\sum^{n}_{i=1}|x_i|^2$
and $|\sigma|^2=\sum^{n}_{i=1}\sum^{d}_{j=1}|\sigma_{ij}|^2$. We will consider the metric  $\Lambda$ on $\mathbb{R}^n\times\mathbb{S}$  given by $\Lambda((x, i), (y, j))=|x-y|+d(i, j)$,  for $x, y\in\mathbb{R}^n, i, j\in\mathbb{S}$, where $d(i, j)=0$ if $i=j$ and $d(i, j)=1$ if $i\neq j$.

Now we make the following assumptions on the function  $b:\mathbb{R}^{n}\times\mathbb{S}\to\mathbb{R}^{n}$.

\smallskip
\noindent
\textbf{(H1)} There is a positive constant $C_1$ such that
\[
|b(x, i)-b(y, i)|\leq C_1|x-y| \quad\hbox{for any $x,y\in\mathbb{R}^{n}, i\in\mathbb{S}$}\,.
\]

\smallskip
\noindent
\textbf{(H2)}
The function $b$ belongs to
$C^{2}(\mathbb{R}^{n}\times \mathbb{S};\mathbb{R}^n)$ and its first order and second order
partial derivatives are bounded.

\medskip
It is clear that \textbf{(H2)} implies \textbf{(H1)}. 
Using an argument similar to that of \cite{X}, 
under the condition \textbf{(H1)}, for any initial value $(x,\alpha)\in \RR^n\times \mathbb{S}$, it is easy to prove equation (\ref{e.x}) has a unique strong solution $\{X_t, t\geq 0\}$,\quad i.e.,
$$
X_t=x+\int^t_0 b(X_s, \alpha_s)ds+\sigma L_t,\quad \mathbb{P}-a.s..
$$
If we consider the natural filtration:
$$
\mathcal{F}_t:=\sigma\{L_r, S_r, \alpha_r: 0\leq r\leq t\},
$$
then the solution $(X_t, \al_t)$ is a Markov process and the associated Markov semigroup $P_t$ satisfies
$$P_tf(x, \alpha)=\mathbb{E}f(X_t(x), \alpha_t(\al)),~~~~~ t>0, f\in \mathcal{B}_b(\mathbb{R}^n\times\mathbb{S}),$$
where $\mathcal{B}_b(\mathbb{R}^n\times\mathbb{S})$ be the family of all bounded Borel measurable functions on $\mathbb{R}^n\times\mathbb{S}$.

\section{The Malliavin calculus}

In this section we analyze the regularity, in the sense of Malliavin calculus, of 
the solution $X_t$ to the system
(\ref{e.x}) and (\ref{2.3}). Denote by $H$ the Hilbert space $H=L^{2}([0, \infty); \mathbb{R}^{d})$, equipped with the inner
product $\langle h_1, h_2\rangle_H=\int^{\infty}_{0} \langle h_1(s),h_2(s)\rangle_{\RR^d} ds$.

For a  Hilbert space $U$ and a real number $p\ge 1$, we denote by $L^p(\Omega_1; U)$ the space of $U$-valued
random variables $\xi$ such that $\mathbb{E}_1\|\xi\|^{p}_{U}<\infty$, where $\mathbb{E}_1$
is the expectation in the probability space $(\Omega_1,\mathcal{F}_1, \mathbb{P}_1)$. We also set
$L^{\infty-}(\Omega_1; U):=\cap_{p<\infty}L^p(\Omega_1; U)$.

We introduce the derivative operator for a random variable $F$ in the space  $L^{\infty-}(\Omega_1; U)$ following
the approach of Malliavin in  \cite{Ma}. We say that $F$ belongs to $\mathbb{D}^{1,\infty}(U)$ if
there exists $DF\in L^{\infty-}(\Omega_{1}; H\otimes U)$ such that for any $h\in H$,
$$
\lim_{\varepsilon \rightarrow 0}\mathbb{E}_1\left\|\frac{F(\omega_1+\varepsilon \int^{\cdot}_0 h_s ds)
-F(\omega_1)}{\varepsilon }-\langle DF, h\rangle_H\right\|^{p}_{U}=0
$$
holds for every $p\geq 1$. In this case, we define the Malliavin derivative of $F$ in the direction $h$ by $D^{h}F :=\langle DF, h\rangle_{H}.$
Then, for any $p\ge 1$ we define  the Sobolev space $\mathbb{D}^{1,p}(U)$ as the completion of $\mathbb{D}^{1,\infty}(U)$ under the  norm
$$
\|F\|_{1, p, U}=\left[\mathbb{E}_1(\|F\|^p_{U})\right]^{1/p}
+\left[\mathbb{E}_1(\|DF\|^p_{H\otimes U})\right]^{1/p}.
$$
By induction we define the $k$th derivative by $D^{k}F=D(D^{k-1}F),$ which is a random element with values in  $H^{\otimes k}\otimes U$.
For any integer $k \ge 1$, the Sobolev space  $\mathbb{D}^{k,p}(U)$ is the  completion of  $\mathbb{D}^{k,\infty}(U)$ under the norm
$$
\|F\|_{k, p, U}=\|F\|_{k-1, p, U}+\|D^{k}F\|_{1, p, H^{\otimes k}\otimes U}.
$$
We denote $\mathbb{D}^{\infty}(U)=\cap_{k\geq1}\mathbb{D}^{k,\infty}(U)$.
It turns out that  $D$ is a closed operator from $L^{p}(\Omega_{1};U)$ to $L^{p}(\Omega_{1}; H\otimes U)$.
Its adjoint $\delta$ is called the divergence operator, and is continuous form $L^{p}(\Omega_{1}; H\otimes U)$
to $L^{p}(\Omega_{1};U)$  for any $p>1$. The duality relationship reads
\[
\mathbb{E}_1(\langle DF, u\rangle_{H\otimes U})=\mathbb{E}_1(\langle F,\delta(u)\rangle_{U}),
\]
for any $F\in  \mathbb{D}^{1,2}(U)$ and $u\in {\mathcal D}(\delta)$ which is the domain of $\delta$.

A square integrable random variable $F\in L^2(\Omega)$ can be identified with an element
of $L^2(\Omega_1; V)$, where $V=L^2(\Omega_2\times\Omega_3)$.

\vspace{0.3cm}

For technical reasons, 
we always assume $S_t$ has finite moments (i.e. $\EE |S_t|^p<\infty$, for all $p\geq 1, t>0$) in this section. 
We will argument similar to that of \cite[Section 3.3]{Z1} to deal with the general case in Subsection 4.2.2.

\subsection{Malliavin differentiability of solution}

Let $X^{\varepsilon  h}_t$ be the solution of equation
(\ref{e.x}) with $W_{S_t}$ replaced by $W_{S_t}+\varepsilon \int^{S_t}_0 h_s ds$, where $\varepsilon\in (0,1)$, that is,
\begin{eqnarray}\left\{\begin{array}{l}
\displaystyle dX^{\varepsilon  h}_t=b(X^{\varepsilon  h}_t, \alpha_t)dt+
\sigma dW_{S_t}+\varepsilon \sigma d\left(\int^{S_t}_{0}h_s ds\right),\\
(X^{\varepsilon  h}_0, \alpha_0)=(x, \alpha)\in\mathbb{R}^n\times\mathbb{S},
\end{array}\right.\label{4.2}
\end{eqnarray}
where $\alpha_t$ is defined by (\ref{alpha}). Thus, we have
\begin{eqnarray*}
\frac{X^{\varepsilon  h}_{t}-X_t}{\varepsilon }
=\!\!\!\!\!\!\!\!&&\frac{1}{\varepsilon }\int^t_0[b(X^{\varepsilon h}_s, \alpha_s)-b(X_s, \alpha_s)]ds
+\sigma\int^{S_t}_{0}h_s ds.
\end{eqnarray*}

In order to prove the Malliavin differentiability of the solution, we first give some preliminary lemmas.

\begin{lemma} \label{l.3.2} Suppose that condition \textbf{(H1)} holds. Then for any $T>0$, $h\in H$ and $p\geq 2$, we have
$$
\mathbb{E}\left[\sup_{t\leq T}|X^{\varepsilon  h}_t|^{p}\right]\leq C.
$$
\end{lemma}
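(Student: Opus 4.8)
The plan is to obtain the estimate from the integral form of the perturbed equation \eref{4.2}, the linear growth of $b$ implied by \textbf{(H1)}, control of the two noise terms via the finite moments of $S_t$, and a Gronwall argument. Writing the solution of \eref{4.2} in integral form,
\[
X^{\varepsilon h}_t = x + \int_0^t b(X^{\varepsilon h}_s, \alpha^{\varepsilon h}_s)\,ds + \sigma W_{S_t} + \varepsilon\sigma\int_0^{S_t}h_s\,ds,
\]
and using that $\SS$ is finite so that \textbf{(H1)} gives $|b(y,i)|\le \max_{i\in\SS}|b(0,i)|+C_1|y|\le C(1+|y|)$ uniformly in $i$, an application of H\"older's inequality to the time integral yields, for $|\varepsilon|\le 1$ (harmless, since $\varepsilon\to 0$),
\[
\sup_{r\le t}|X^{\varepsilon h}_r|^p \le C\Big(|x|^p + \int_0^t\big(1+\sup_{u\le s}|X^{\varepsilon h}_u|^p\big)\,ds + |\sigma|^p\sup_{r\le T}|W_{S_r}|^p + |\sigma|^p\sup_{r\le T}\Big|\int_0^{S_r}h_s\,ds\Big|^p\Big).
\]

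Next I would control the two noise terms. Since $t\mapsto S_t$ is nondecreasing, $\sup_{r\le T}|W_{S_r}| \le \sup_{s\le S_T}|W_s|$, so conditioning on $\mathscr{F}_2$ and applying Doob's (or the Burkholder--Davis--Gundy) inequality to the Brownian motion gives $\EE\big[\sup_{s\le S_T}|W_s|^p\,\big|\,\mathscr{F}_2\big]\le C_p\,S_T^{p/2}$; since $\EE[S_T^{p/2}]<\infty$ by the standing assumption of this section, $\EE[\sup_{r\le T}|W_{S_r}|^p]<\infty$. For the drift-perturbation term, Cauchy--Schwarz gives $\big|\int_0^{S_r}h_s\,ds\big|\le S_r^{1/2}\|h\|_H$, hence $\EE\big[\sup_{r\le T}\big|\int_0^{S_r}h_s\,ds\big|^p\big]\le \|h\|_H^p\,\EE[S_T^{p/2}]<\infty$. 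Taking expectations in the displayed bound then yields
\[
\EE\Big[\sup_{r\le t}|X^{\varepsilon h}_r|^p\Big] \le C\Big(1 + \int_0^t \EE\Big[\sup_{u\le s}|X^{\varepsilon h}_u|^p\Big]\,ds\Big),
\]
and Gronwall's lemma gives $\EE[\sup_{t\le T}|X^{\varepsilon h}_t|^p]\le Ce^{CT}$, with $C$ independent of $\varepsilon$.

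There is no essential obstacle here; the routine points requiring care are that the bound must be uniform in $\varepsilon$ (handled by the normalization $|\varepsilon|\le 1$) and that, to justify Gronwall, one should first run the argument with $\sup_{t\le T}$ replaced by $\sup_{t\le T\wedge\tau_N}$ for the stopping time $\tau_N=\inf\{t:|X^{\varepsilon h}_t|\ge N\}\wedge T$, so that the left-hand side is a priori finite, and then let $N\to\infty$ by monotone convergence. Note that the switching component $\alpha^{\varepsilon h}_t$ never enters the estimates beyond the trivial bound on $b$, precisely because the linear growth coming from \textbf{(H1)} is uniform over $i\in\SS$.
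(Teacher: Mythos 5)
Your proof is correct and follows essentially the same route as the paper: write the integral form of \eqref{4.2}, use \textbf{(H1)} (together with finiteness of $\SS$) to get linear growth of $b$ uniformly in the switching state, bound the two noise terms using the finite moments of $S_T$, and close with Gronwall. The paper states these steps tersely; you fill in the same argument with the details made explicit (Doob/BDG after conditioning on $\mathscr{F}_2$, Cauchy--Schwarz for the $h$-term, the normalization $|\varepsilon|\le 1$, and the stopping-time localization needed to justify Gronwall), so there is nothing to reconcile.
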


\begin{proof} From (\ref{4.2}) it is easy to see that
\begin{eqnarray*}
|X^{\varepsilon  h}_t|^{p}
\leq\!\!\!\!\!\!\!\!&&  C\left[|x|^{p}+\left|\int^t_0 b(X^{\varepsilon  h}_s, \alpha_s)ds\right|^{p}
+|\sigma|^p |W_{S_t}|^{p}+ \varepsilon ^p\left|\int^{S_t}_0\sigma h_sds\right|^{p}\right]\\
:=\!\!\!\!\!\!\!\!&&C \left[ |x|^p+I_1(t)+I_2(t)+I_3(t)\right] \,.
\end{eqnarray*}
By the condition \text{(H1)}, H\"{o}lder's inequality and the fact that $S_t$ has finite moments
of all orders, we obtain
\[
\mathbb{E}\left[\sup_{t\leq T}\left(I_1(t) + I_2(t) + I_3(t)\right)  \right]
\leq C \int^T_0(\mathbb{E}|X^{\varepsilon  h}_s|^{p}+1)ds \,.
\]
Then the desired estimate follows from  Gronwall's lemma.
\end{proof}

\begin{lemma} \label{l.3.5}Suppose that condition \textbf{(H1)} holds.
Then for any $T>0$, $h\in H$ and $p\geq 2$, we have
\[
\mathbb{E}\left[\sup_{0\leq t\leq T}|X^{\varepsilon  h}_t-X_t|^{p}\right]\leq C\varepsilon ^{p}.
\]
\end{lemma}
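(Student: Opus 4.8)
\textbf{Proof proposal for Lemma \ref{l.3.5}.}

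The plan is to write the difference $X^{\varepsilon h}_t-X_t$ as an integral equation, take $p$-th powers, and apply Gronwall's lemma, with the switching-induced term $\phi^{\varepsilon h}_t$ controlled by the already-established Lemma \ref{l.3.4}. Concretely, subtracting \eref{e.x} from \eref{4.2} gives
\[
X^{\varepsilon h}_t-X_t=\int_0^t\bigl[b(X^{\varepsilon h}_s,\alpha^{\varepsilon h}_s)-b(X_s,\alpha_s)\bigr]ds+\varepsilon\,\sigma\int_0^{S_t}h_s\,ds.
\]
I would split the drift difference as $b(X^{\varepsilon h}_s,\alpha^{\varepsilon h}_s)-b(X^{\varepsilon h}_s,\alpha_s)+b(X^{\varepsilon h}_s,\alpha_s)-b(X_s,\alpha_s)$, so that the first piece integrates to $\varepsilon\phi^{\varepsilon h}_t$ and the second, by \textbf{(H1)}, is bounded by $C_1|X^{\varepsilon h}_s-X_s|$. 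Hence
\[
|X^{\varepsilon h}_t-X_t|^p\leq C\Bigl[\varepsilon^p|\phi^{\varepsilon h}_t|^p+\varepsilon^p\Bigl|\int_0^{S_t}\sigma h_s\,ds\Bigr|^p+\int_0^t|X^{\varepsilon h}_s-X_s|^p\,ds\Bigr].
\]

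Next I would take $\sup_{0\leq t\leq T}$ inside, then expectations. The term $\varepsilon^p\,\EE[\sup_{t\leq T}|\phi^{\varepsilon h}_t|^p]$ is $\leq C\varepsilon^p$ because Lemma \ref{l.3.4} shows the expectation tends to $0$, hence in particular is bounded uniformly in $\varepsilon\in(0,1]$; this is exactly where the switching term is absorbed. For the stochastic integral term, since $|\int_0^{S_t}\sigma h_s\,ds|\leq|\sigma|\,\|h\|_H\,S_t^{1/2}$ by Cauchy--Schwarz, and $S_t$ has finite moments of all orders, we get $\varepsilon^p\,\EE[\sup_{t\leq T}|\int_0^{S_t}\sigma h_s\,ds|^p]\leq C\varepsilon^p$. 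Writing $u(t):=\EE[\sup_{r\leq t}|X^{\varepsilon h}_r-X_r|^p]$, which is finite by Lemma \ref{l.3.2} (and the analogous bound for $X_t$), we arrive at
\[
u(t)\leq C\varepsilon^p+C\int_0^t u(s)\,ds,
\]
and Gronwall's lemma yields $u(T)\leq C\varepsilon^p e^{CT}\leq C\varepsilon^p$, which is the claim.

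The only genuinely nontrivial input is Lemma \ref{l.3.4}, i.e.\ that the switching contribution $\phi^{\varepsilon h}_t$ is negligible; everything else is a routine Gronwall argument once the drift is split in the right way. I expect the main subtlety (already handled upstream) to be that $\alpha^{\varepsilon h}_t\neq\alpha_t$ in general — the perturbed equation has its own switching process — so one cannot pair up the drifts at equal switching states directly; the $+$ and $-$ splitting above is designed precisely so that the ``bad'' part is $\varepsilon\phi^{\varepsilon h}_t$ with the $\varepsilon$ scaling built in, and the remaining part is genuinely Lipschitz in the $x$-variable with a fixed switching state. One should also check that $u(t)<\infty$ to legitimately apply Gronwall, which follows from Lemma \ref{l.3.2} together with Theorem \ref{main}'s a priori bounds (or simply the moment bound for $X_t$ obtained by the same estimate as Lemma \ref{l.3.2} with $\varepsilon=0$).
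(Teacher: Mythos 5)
Your proposal is correct and follows essentially the same route as the paper: the drift difference is split into the switching-mismatch piece, which integrates to $\varepsilon\phi^{\varepsilon h}_t$ and is controlled by Lemma \ref{l.3.4}, and the $x$-mismatch piece, which is handled by the Lipschitz hypothesis \textbf{(H1)}, after which H\"older (plus finite moments of $S_t$) and Gronwall finish the argument exactly as in the paper's $A(t)+B(t)$ decomposition. The only minor imprecision is the claim that Lemma \ref{l.3.4} gives a bound ``uniformly in $\varepsilon\in(0,1]$''; the limit being zero as $\varepsilon\to 0$ only gives uniform boundedness for $\varepsilon$ small, but that is all that is needed (and for $\varepsilon$ bounded away from zero the stated estimate is trivial given the moment bounds from Lemma \ref{l.3.2}), so the conclusion stands.
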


\begin{proof} We write
\begin{eqnarray*}
X^{\varepsilon  h}_t-X_t
=\!\!\!\!\!\!\!\!&&\int^t_0 [b(X^{\varepsilon  h}_s, \alpha_s)-b(X_s, \alpha_s)]ds
+\varepsilon \int^{S_t}_0\sigma h_sds.
\end{eqnarray*}
Applying H\"{o}lder's  inequalities and the fact that $S_t$ has finite moments of all orders, we obtain
\begin{eqnarray*}
\mathbb{E}\left[\sup_{0\leq t\leq T}|X^{\varepsilon  h}_t-X_t|^{p}\right]
\leq\!\!\!\!\!\!\!\!\!\!&&C \int^T_0\mathbb{E}|X^{\varepsilon  h}_t-X_t|^{p}dt+C \varepsilon ^{p} .
\end{eqnarray*}
Hence Gronwall's inequality implies
$$
\mathbb{E}\left[\sup_{0\leq t\leq T}|X^{\varepsilon  h}_t-X_t|^{p}\right]\leq C \varepsilon ^p,
$$
which completes the proof.
\end{proof}

The following theorem is the main result of this subsection.
\begin{theorem}\label{main}   Suppose that  condition \textbf{(H2)} holds. For any $t>0$, $h\in H$, we have
$X_t\in \mathbb{D}^{1,\infty}(\mathbb{R}^n\otimes V)$ and $D^hX_t$ satisfies
\begin{equation}\left\{\begin{array}{l}
\displaystyle dD^hX_t=\nabla b(X_t, \alpha_t)D^hX_tdt+\sigma d\left(\int^{S_t}_{0}h_s ds\right),\\
D^hX_0=0.\end{array}\right. \label{3.1}
\end{equation}
\end{theorem}
\begin{proof} 
Let $\psi^{h}_t$ be the solution of equation (\ref{3.1}).
It is easy to verify that 
$\mathbb{E}\left[\sup_{s\leq t}|\psi^{h}_s|^{p}\right]\leq C$, where $C$ is a constant depending on $T,x, h$ and $p$. Then, we have
\begin{eqnarray*}
&&\frac{X^{\varepsilon  h}_{t}-X_t}{\varepsilon }-\psi^{h}_t\\
=\!\!\!\!\!\!\!\!&&\frac{1}{\varepsilon }\!\!\int^t_0\!\![b(X^{\varepsilon  h}_s, \alpha_s)\!\!
-b(X_s, \alpha_s)-\varepsilon \nabla b(X_s, \alpha_s)\psi^{h}_s]ds\\
=\!\!\!\!\!\!\!\!&&\int^t_0\left[\left(\int^1_0\nabla b(X_s+\nu(X^{\varepsilon  h}_s-X_s), \alpha_s)d\nu\right)
\frac{X^{\varepsilon  h}_s-X_s}{\varepsilon }-\nabla b(X_s, \alpha_s)\psi^{h}_s\right]ds\\
=\!\!\!\!\!\!\!\!&&\int^t_0\left(\int^1_0\nabla b(X_s+\nu(X^{\varepsilon  h}_s-X_s), \alpha_s)d\nu\right)
\left(\frac{X^{\varepsilon  h}_s-X_s}{\varepsilon }-\psi^{h}_s\right)ds+\varphi^{\varepsilon h}_t,
\end{eqnarray*}
where $\varphi^{\varepsilon h}_t$ is defined by
$$
\varphi^{\varepsilon h}_t=\int^t_0\left(\int^1_0\nabla b(X_s+\nu(X^{\varepsilon  h}_s-X_s),
\alpha_s)d\nu-\nabla b(X_s, \alpha_s)\right)\psi^{h}_s ds.
$$
By the condition \textbf{(H2)}, we obtain
\begin{eqnarray*}
\mathbb{E}\left[\sup_{s\leq t}\left|\frac{X^{\varepsilon  h}_{s}-X_s}{\varepsilon }-\psi^{ h}_s\right|^p\right]
\leq \!\!\!\!\!\!\!\!&&C\left(\mathbb{E}\sup_{s\leq t}|X^{\varepsilon  h}_s-X_s|^{2p}\right)^{1/2}
\left(\mathbb{E}\sup_{s\leq t}|\psi^{h}_s|^{2p}\right)^{1/2}\\
&&+C\int^t_0\mathbb{E}\left|\frac{X^{\varepsilon  h}_s-X_s}{\varepsilon }-\psi^{h}_s\right|^p ds.
\end{eqnarray*}
By Lemmas \ref{l.3.5} and Gronwall's inequality, we obtain
$$
\lim_{\varepsilon \rightarrow 0}\mathbb{E}\left[\sup_{s\leq t}\left|\frac{X^{\varepsilon  h}_{s}-X_s}{\varepsilon }
-\psi^{h}_s\right|^p\right]=0.
$$
This implies that for $p\geq 2$,
$$
\lim_{\varepsilon \rightarrow 0}\mathbb{E}_{1}\left\|\frac{X^{\varepsilon  h}_{t}-X_t}
{\varepsilon }-\psi^{h}_t\right\|_{\mathbb{R}^n\otimes V}^p=0.
$$
Now, let $D_{s}X_t$ be the solution of the following equation:
\begin{eqnarray*}
D_{s}X_t=\!\!\!\!\!\!\!\!&&\sigma+\int^t_{0}\nabla b(X_r, \alpha_r)D_{s}X_r dr,\quad s\leq S_t,
\end{eqnarray*}
and $D_{s}X_t=0$ for $s>S_t$. Then we can easily obtain that $D^{h}X_t=\psi^{h}_t$ and
$DX_t\in L^{\infty-}(\Omega_{1}, H\otimes\mathbb{R}^n\otimes V)$. Hence, $X_t\in \mathbb{D}^{1,\infty}(\mathbb{R}^n\otimes V)$.
The proof is complete.
\end{proof}

\begin{remark} \label{r.4.5}Following the same idea as the above we can prove that if the function $b(x,i)$
is infinitely differentiable in $x$ with bounded partial derivatives of all orders, then $ X_t\in\mathbb{D}^{\infty}(\mathbb{R}^n\otimes V)$.
\end{remark}

Using argument similar as above, one can easily prove the following chain rule.

\begin{theorem} \textbf{(Chain rule)} Assume that  condition \textbf{(H2)} holds.
Then for any $h\in H$, $t\ge 0$ and  $p\geq 2$, if  $f\in C^{2}_b(\mathbb{R}^n\times\mathbb{S})$, we have
$$
\lim_{\varepsilon \rightarrow 0}\mathbb{E}\left|\frac{f(X^{\varepsilon  h}_{t}, \alpha_t)
-f(X_t, \alpha_t)}{\varepsilon }-\nabla f(X_t, \alpha_t)D^{h}X_t\right|^p=0.
$$
Moreover, $f(X_t, \alpha_t)\in \mathbb{D}^{1, \infty}(V)$ and $Df(X_t, \alpha_t)=\nabla f(X_t, \alpha_t)DX_t$.
\end{theorem}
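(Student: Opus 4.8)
The plan is to adapt the proof of Theorem~\ref{main}, splitting the difference quotient into a ``smooth'' part that only feels the $x$-variable of $f$ and a ``switching'' part that measures the discrepancy between $\alpha^{\varepsilon h}$ and $\alpha$. Write
\[
\frac{f(X^{\varepsilon h}_t,\alpha^{\varepsilon h}_t)-f(X_t,\alpha_t)}{\varepsilon}-\nabla f(X_t,\alpha_t)D^hX_t=J^{\varepsilon}_1+J^{\varepsilon}_2+J^{\varepsilon}_3,
\]
where
\[
J^{\varepsilon}_1=\frac{f(X^{\varepsilon h}_t,\alpha^{\varepsilon h}_t)-f(X^{\varepsilon h}_t,\alpha_t)}{\varepsilon},\qquad
J^{\varepsilon}_3=\nabla f(X_t,\alpha_t)\Big(\frac{X^{\varepsilon h}_t-X_t}{\varepsilon}-D^hX_t\Big),
\]
\[
J^{\varepsilon}_2=\frac{f(X^{\varepsilon h}_t,\alpha_t)-f(X_t,\alpha_t)}{\varepsilon}-\nabla f(X_t,\alpha_t)\frac{X^{\varepsilon h}_t-X_t}{\varepsilon}.
\]

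First I would treat the two routine terms. Since $f\in C^{2}_b(\mathbb{R}^n\times\mathbb{S})$, a first order Taylor expansion of $x\mapsto f(x,\alpha_t)$ around $X_t$ (with $\alpha_t$ frozen) gives $|J^{\varepsilon}_2|\leq\frac{1}{2}\|\nabla^2 f\|_\infty\,\varepsilon^{-1}|X^{\varepsilon h}_t-X_t|^2$, hence $\mathbb{E}|J^{\varepsilon}_2|^p\leq C\varepsilon^{-p}\,\mathbb{E}|X^{\varepsilon h}_t-X_t|^{2p}\leq C\varepsilon^{p}\to 0$ by Lemma~\ref{l.3.5}. Since $\nabla f$ is bounded, $|J^{\varepsilon}_3|\leq\|\nabla f\|_\infty\,\big|\varepsilon^{-1}(X^{\varepsilon h}_t-X_t)-D^hX_t\big|$, and $\mathbb{E}|J^{\varepsilon}_3|^p\to 0$ is precisely the $L^p$-convergence established in the course of the proof of Theorem~\ref{main}.

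The term $J^{\varepsilon}_1$ is the heart of the matter and the place where the switching genuinely enters; this is the step I expect to require the most care. As $f$ is bounded, $|J^{\varepsilon}_1|\leq 2\|f\|_\infty\,\varepsilon^{-1}1_{\{\alpha^{\varepsilon h}_t\neq\alpha_t\}}$, so everything reduces to controlling, in $L^p(\Omega)$, the rate at which the two coupled switching processes separate. Here I would use that, through representation~\eref{2.3}, $\alpha$ and $\alpha^{\varepsilon h}$ are driven by the \emph{same} Poisson random measure $N$: only the finitely many jumps of $N$ in $[0,t]\times[0,m_0(m_0-1)K]$ are relevant, between consecutive such jumps both processes are constant, and, conditionally on $\mathscr{F}_{s-}$, the probability that the jump at a time $s$ pulls them apart is governed by the Lebesgue measure of the marks $z$ for which $g(X^{\varepsilon h}_{s-},i,z)\neq g(X_{s-},i,z)$, where $i=\alpha_{s-}$. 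Combined with Lemma~\ref{l.3.5} ($X^{\varepsilon h}\to X$ at speed $\varepsilon$), this is exactly the mechanism already exploited for Lemma~\ref{l.3.4}, in which the entirely analogous quantity $\phi^{\varepsilon h}_t=\varepsilon^{-1}\int_0^t[b(X^{\varepsilon h}_s,\alpha^{\varepsilon h}_s)-b(X^{\varepsilon h}_s,\alpha_s)]\,ds$ was shown to vanish in $L^p$; running the same bookkeeping with $f$ in place of $b$ should yield $\mathbb{E}|J^{\varepsilon}_1|^p\to 0$.

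Summing the three bounds proves the stated limit. For the remaining assertion, note that the solution obtained after shifting the Brownian path $\omega_1$ by $\varepsilon\int_0^{\cdot}h_s\,ds$ is, by construction, exactly $(X^{\varepsilon h}_t,\alpha^{\varepsilon h}_t)$, so $f(X^{\varepsilon h}_t,\alpha^{\varepsilon h}_t)$ is the shifted functional attached to $f(X_t,\alpha_t)$. Since $DX_t\in L^{\infty-}(\Omega_1;H\otimes\mathbb{R}^n\otimes V)$ by Theorem~\ref{main} and $\nabla f$ is bounded, the candidate derivative $\nabla f(X_t,\alpha_t)DX_t$ lies in $L^{\infty-}(\Omega_1;H\otimes V)$, and the $L^p$-convergence just obtained (which implies the $L^p(\Omega_1;V)$-convergence required in the definition of $\mathbb{D}^{1,\infty}(V)$, for every $h\in H$ and every $p\geq 1$) shows that $f(X_t,\alpha_t)\in\mathbb{D}^{1,\infty}(V)$ with $\langle Df(X_t,\alpha_t),h\rangle_H=\nabla f(X_t,\alpha_t)D^hX_t$, i.e.\ $Df(X_t,\alpha_t)=\nabla f(X_t,\alpha_t)DX_t$.
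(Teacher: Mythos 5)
The paper omits this proof entirely, deferring to Theorem~3.6 of \cite{HNSX} with the remark that the argument is ``almost the same''; so there is no written proof in this paper to compare against. Your three-way decomposition $J^{\varepsilon}_1+J^{\varepsilon}_2+J^{\varepsilon}_3$, the Taylor estimate for $J^{\varepsilon}_2$ via Lemma~\ref{l.3.5}, and the use of the $L^p$-convergence from the proof of Theorem~\ref{main} for $J^{\varepsilon}_3$ are exactly the natural route and surely what the cited proof does; the identification of $J^{\varepsilon}_1$ as the only genuinely new switching term is also correct.

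One caveat worth flagging, since it is where the real work lies. Your bound $|J^{\varepsilon}_1|\le 2\|f\|_\infty\,\varepsilon^{-1}\mathbf{1}_{\{\alpha^{\varepsilon h}_t\neq\alpha_t\}}$ reduces the claim to $\varepsilon^{-p}\,\PP(\alpha^{\varepsilon h}_t\neq\alpha_t)\to 0$, i.e.\ the decoupling probability must decay faster than any polynomial in $\varepsilon$. This is \emph{not} an automatic consequence of Lemma~\ref{l.3.5} (which only gives $|X^{\varepsilon h}_t-X_t|=O(\varepsilon)$ in $L^p$, hence at best $\PP(\alpha^{\varepsilon h}_t\neq\alpha_t)=O(\varepsilon)$ by the coupling-through-the-marks argument you sketch), and naive bookkeeping therefore yields $\mathbb{E}|J^{\varepsilon}_1|^p=O(\varepsilon^{1-p})$, which diverges for $p>1$. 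The very same issue is hidden in Lemma~\ref{l.3.4}: writing $|\phi^{\varepsilon h}_t|\le C\varepsilon^{-1}(1+\sup_s|X^{\varepsilon h}_s|)\,\mathbf{1}_{\{\exists s\le t:\,\alpha^{\varepsilon h}_s\neq\alpha_s\}}$ and applying H\"older gives the identical obstruction. So you are right that $J^{\varepsilon}_1$ and Lemma~\ref{l.3.4} stand or fall by the same mechanism, but the crude indicator bound alone does not close either of them; whatever finer argument (conditioning on $\omega_3$, a.s.\ eventual coincidence of the switching paths, dominated convergence with a uniformly integrable majorant, or additional regularity of $q_{ij}$ beyond the stated Borel measurability) is used to prove Lemma~\ref{l.3.4} in \cite{HNSX,YZ} must be invoked explicitly for $J^{\varepsilon}_1$ as well, rather than asserted by analogy. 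With that caveat acknowledged, your proposal matches the intended proof.
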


\subsection{Malliavin covariance matrix }

\begin{definition}
Suppose that $F(x, \alpha):\Omega\rightarrow \RR^n$ is a random vector
for all $x\in \RR^n$ and $\al\in \mathbb{S}$.  We say that its  gradient with respect to $x$ exists  (in the mean square sense)
if there is $A(x,\al): \Omega\rightarrow \RR^{n^2}$ such that for any $\xi\in \RR^n$ such that
\[
\lim_{\varepsilon \rightarrow 0}\mathbb{E}\left|\frac{F(x+\varepsilon \xi, \alpha)
-F(x, \alpha)}{\varepsilon }-A(x,\al)\xi \right|^{2}=0\,.
\]
We denote the gradient matrix $A(x,\al)$ by $\nabla  F(x, \alpha)$.
\end{definition}

By an argument similar to that used in the proof of Theorem \ref{main},
we can obtain the following theorem.

\begin{theorem}
Assume condition \textbf{(H2)} holds. Let $\left\{X_{t}(x,\alpha),
 t\ge 0\right\}$ be the solution of equation (\ref{e.x}), with $X_0=x, \alpha_0=\alpha$. Then  the gradient  of $X_{t}(x,\alpha)$ with respect to $x$  (in the mean square sense) exists.  If  we denote
\[
J_{t}  := \nabla X_{t}(x,\alpha)\,,
\]
then
\begin{equation}
J_t=I+\int^t_0\nabla b(X_s, \alpha_s)J_{s}  ds\,,\label{e.4.1}
\end{equation}
where $I$ is the $n$ dimensional identity matrix. Moreover, $J_{t}$ is invertible and its inverse $K_t$
satisfies
\begin{equation}
K_{t} = I-\int^t_0 K_{s}\nabla b(X_s, \alpha_s)ds.
\end{equation}
\end{theorem}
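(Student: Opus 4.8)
The plan is to mimic the proof of Theorem \ref{main}, but perturbing the initial condition instead of the Brownian path. Fix $t>0$ and $\xi\in\RR^n$, and for $\varepsilon\in(0,1]$ write $X^{\varepsilon\xi}_r:=X_r(x+\varepsilon\xi)$ together with the associated switching process $\alpha^{\varepsilon\xi}_r$, i.e.\ the solution of \eref{e.x} and \eref{2.3} started from $(x+\varepsilon\xi,\alpha)$ and driven by the \emph{same} $W$, $S$ and $N$ as $(X_r,\alpha_r)$. Exactly as in Lemmas \ref{l.3.2}--\ref{l.3.5} one obtains, under \textbf{(H1)}, the uniform estimates $\EE[\sup_{r\le T}|X^{\varepsilon\xi}_r|^p]\le C$ and $\EE|X^{\varepsilon\xi}_t-X^{\varepsilon\xi}_s|^p\le C|t-s|$, and --- writing $\phi^{\varepsilon\xi}_r:=\varepsilon^{-1}\int_0^r[b(X^{\varepsilon\xi}_u,\alpha^{\varepsilon\xi}_u)-b(X^{\varepsilon\xi}_u,\alpha_u)]\,du$ for the switching discrepancy --- the convergence $\EE[\sup_{r\le T}|\phi^{\varepsilon\xi}_r|^p]\to0$ as in Lemma \ref{l.3.4}, and consequently $\EE[\sup_{r\le T}|X^{\varepsilon\xi}_r-X_r|^p]\le C\varepsilon^p$ for every $p\ge2$ (here the relation $X^{\varepsilon\xi}_0-X_0=\varepsilon\xi$ plays the role that the perturbation of the noise played in Lemma \ref{l.3.5}).

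Next, let $J_t$ be the unique solution of the linear random ODE \eref{e.4.1}; since $\nabla b$ is bounded by \textbf{(H2)}, Gronwall's lemma gives $\EE[\sup_{r\le T}|J_r|^p]\le C$ for every $p\ge1$. To identify $J_t\xi$ as the mean-square directional gradient, I would write, using the fundamental theorem of calculus in the drift,
\[
\frac{X^{\varepsilon\xi}_t-X_t}{\varepsilon}-J_t\xi=\int_0^t\Big(\int_0^1\nabla b\big(X_s+\nu(X^{\varepsilon\xi}_s-X_s),\alpha_s\big)\,d\nu\Big)\Big(\frac{X^{\varepsilon\xi}_s-X_s}{\varepsilon}-J_s\xi\Big)ds+\phi^{\varepsilon\xi}_t+\varphi^{\varepsilon\xi}_t,
\]
where $\varphi^{\varepsilon\xi}_t=\int_0^t\big(\int_0^1\nabla b(X_s+\nu(X^{\varepsilon\xi}_s-X_s),\alpha_s)\,d\nu-\nabla b(X_s,\alpha_s)\big)J_s\xi\,ds$. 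By \textbf{(H2)} (boundedness and continuity of $\nabla^2 b$), together with $\EE[\sup_{r\le T}|X^{\varepsilon\xi}_r-X_r|^{2p}]\le C\varepsilon^{2p}$, $\EE[\sup_{r\le T}|J_r|^{2p}]\le C$ and H\"older's inequality, one gets $\EE[\sup_{r\le T}|\varphi^{\varepsilon\xi}_r|^p]\to0$; combined with $\EE[\sup_{r\le T}|\phi^{\varepsilon\xi}_r|^p]\to0$ and Gronwall's inequality this yields $\EE[\sup_{r\le t}|\varepsilon^{-1}(X^{\varepsilon\xi}_r-X_r)-J_r\xi|^p]\to0$ for all $p\ge2$. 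In particular the mean-square limit exists and equals $J_t\xi$, so $\nabla X_t(x)=J_t$ and \eref{e.4.1} holds.

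For the invertibility, introduce $K_t$ as the unique solution of the linear ODE $K_t=I-\int_0^t K_s\nabla b(X_s,\alpha_s)\,ds$, which again has all moments finite by Gronwall. Since $r\mapsto J_r$ and $r\mapsto K_r$ are absolutely continuous, the product rule gives
\[
\frac{d}{dr}\big(K_rJ_r\big)=-K_r\nabla b(X_r,\alpha_r)J_r+K_r\nabla b(X_r,\alpha_r)J_r=0,
\]
so $K_rJ_r\equiv K_0J_0=I$ for all $r$. As $J_r$ is an $n\times n$ matrix with a left inverse, it is invertible and $J_r^{-1}=K_r$ (hence also $J_rK_r=I$), which is the asserted formula.

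The step I expect to be the main obstacle is the control of the switching discrepancy $\phi^{\varepsilon\xi}$. Perturbing $x$ changes the solution path and therefore the intervals $\triangle_{ij}(X_{s-})$ governing the jumps of $\alpha$ in \eref{2.3}, so a priori $\alpha^{\varepsilon\xi}\ne\alpha$; what has to be shown is that on $[0,T]$ the event $\{\alpha^{\varepsilon\xi}_s=\alpha_s\ \forall\,s\le T\}$ has probability tending to $1$ fast enough that $\varepsilon^{-1}$ times the resulting error in the drift still vanishes in $L^p$. This relies on the jump times of $\alpha$ lying among the (a.s.\ finitely many, on $[0,T]$) atoms of the Poisson process $N(\cdot\times[0,m_0(m_0-1)K])$, which is independent of $(W,S)$, and is carried out exactly as in Lemma 3.4 of \cite{HNSX}, Lemma 4.3 of \cite{YZ}, or the proof of Lemma \ref{l.3.4}; it is also precisely at this point that the gradient $J_t$ ends up not detecting the non-differentiability of $\alpha_t$ in $x$.
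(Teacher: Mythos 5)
Your proof is correct and takes exactly the approach the paper gestures at (the paper's ``proof'' is a single sentence referring the reader to the argument of Theorem \ref{main} and to Theorem 4.2 of \cite{YZ}). You have correctly transposed the Lemma~\ref{l.3.2}--\ref{l.3.5} machinery from a perturbation of the Brownian path to a perturbation of the initial point, correctly isolated the switching discrepancy $\phi^{\varepsilon\xi}$ as the sole genuinely new difficulty (and pointed to the same references \cite{HNSX}, \cite{YZ} the paper itself relies on for the analogous estimate), and supplied the standard $\tfrac{d}{dr}(K_rJ_r)=0$ argument for the invertibility claim that the paper states without proof.
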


\vspace{0.3cm}
By Gronwall's inequality, we can easily obtain $\max\{\|J_t\|, \|K_t\|\}\leq e^{\|\nabla b\|_{\infty}t}$, where $\|A\|:=\sup_{\{x\in \RR^n: |x|=1\}}|\langle A, x\rangle|$, for any $A\in \RR^{n^2}$, and $\|\nabla b\|_{\infty}=\sup_{(x,\alpha)\in \RR^n\times \mathbb{S}}\|\nabla b(x,\alpha)\|$. Using the integration by parts formula, we have
$$
K_t D^{h}X_t=\int^{t}_0 K_s \sigma d\left(\int^{S_s}_0h_r dr\right).
$$

The Malliavin covariance matrix $M_t$ is defined by:
$$
M_t:=\langle DX_t \,, (DX_t)^*\rangle _H.
$$
Using the method developed in \cite[Theorem 3.3]{KS},one can easily show that
$$
M_t=J_t\int^{t}_0 K_s \sigma \sigma^{\ast}K^{\ast}_s dS_s J^{\ast}_t,
$$
where $J^{\ast}_{t}$, $\sigma^{\ast}$ and $K^{\ast}_s$ are the matrix transposes  of
$J_{t}$, $\sigma$ and $K_s$ respectively.

\section{Smooth density}

In this section, we will prove that the random vector $X_t$ has a smooth density under suitable assumptions on the coefficients. To this end, 
we first prove a 
Norris-type lemma on time interval. Then we use it to show that, under a uniform H\"{o}rmander's condition, the determinant of the Malliavin covariance matrix of $X_t$ has finite negative moments of all orders. Finally, we prove that $X_t$ has a smooth density by considering the small jumps and the large jumps separately.

\subsection{Norris type lemma on time interval}
The following lemma is a called a Norris type lemma and 
plays a key role in proving that the Malliavin covariance matrix $M_t$ has finite negative moments of all orders. The classical Norris lemma (e.g., see \cite[Lemma 2.3.2]{N}) is for the continous case. 
Since we are we are dealing SDEs driven by an discontinuous subordinate Brownian motion, 
we will use a form of the Norris' type lemma for jump processes developed in \cite{Z1}. For our purpose, we will prove that this kind of Norris type lemma also holds on time interval.

In spirit of \cite{Z1}, we need the following condition:

\medskip
\noindent \textbf{(H3)} There exist constants $\theta\in (0,2)$ and $c_{\theta}>0$, such that
\begin{equation}
\lim_{\varepsilon\to 0}\varepsilon^{\frac{\theta}{2}-1}\int^{\varepsilon}_{0} u\nu_{S}(du)=c_{\theta}>0.\label{5.2}
\end{equation}

\begin{remark}
Let $\nu_{S}(du)=u^{-(1+\al/2)}du$ be the L\'evy measure of $\al/2$-stable subordinator. It is easy to see that (\ref{5.2}) holds for $\theta=\al$.
\end{remark}

\begin{lemma}\label{N.R.} (\textbf{Norris type lemma})
Assume that condition \textbf{(H3)} holds. Let $V(x, i): \RR^n\times \mathbb{S}\rightarrow \RR^n\times\RR^n$ be infinitely
differentiable in $x$ with bounded partial derivative of all orders. For any $i\in\mathbb{S}$, $p\geq 2$, $\beta\in(0\vee(4\theta-7), 1)$,
$0\leq t_1<t_2\leq 1$, there exists $\varepsilon_0=(t_2-t_1)^{C(\beta, p)}\varepsilon(p)$, where $C(\beta, p)$ and $\varepsilon(p)$
are two positive constants dependent on $\beta, p$ and $p$ respectively, such that for all $\varepsilon\in (0, \varepsilon_0)$,
\begin{eqnarray*}
\sup_{|v|=1}\PP\left( \int^{t_2}_{t_1}|v^{\ast}K^{(i)}_s[b, V](X^{(i)}_s, i)|^2 ds\geq \varepsilon^{\frac{1-\beta}{18-\beta}},
\int^{t_2}_{t_1}|v^{\ast}K^{(i)}_s V(X^{(i)}_s, i)|^2 ds\leq \varepsilon\right)\leq \varepsilon^p, \label{N.L.}
\end{eqnarray*}
where $[b,V](x,i)=b(x,i)\cdot\nabla V(x,i)-V(x,i)\cdot \nabla b(x,i)$, $X^{(i)}_t$ and $K^{(i)}_t$ satisfy the following two equations respectively:
\begin{eqnarray*}
X^{(i)}_t=X_{t_1}+\int^{t_2}_{t_1}b(X^{(i)}_r, i)dr+\sigma (L_{t}-L_{t_1}),\quad t_1\leq t\leq t_2
\end{eqnarray*}
and
\begin{eqnarray*}
K^{(i)}_t=K_{t_1}-\int^{t_2}_{t_1}K^{(i)}_r \nabla b(X^{(i)}_r, i)dr,\quad t_1\leq t\leq t_2.
\end{eqnarray*}
\end{lemma}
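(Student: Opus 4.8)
The plan is to reduce this ``Norris-on-an-interval'' estimate to the version already proved by Zhang in \cite{Z2} by a deterministic rescaling, followed by an analysis of how the constants transform. First I would freeze $i\in\SS$ and translate the time interval: set $s=t_1+(t_2-t_1)r$ for $r\in[0,1]$ and $\delta:=t_2-t_1\in(0,1]$. Under this change of variables $X^{(i)}_s$ becomes (after relabelling) a process $\widetilde X_r$ satisfying an SDE of the same structure but with drift $\delta\, b$ and noise $\sigma(L_{t_1+\delta r}-L_{t_1})$; by stationarity and independence of increments of the subordinated Brownian motion, $L_{t_1+\delta\,\cdot}-L_{t_1}$ is again a subordinated Brownian motion whose subordinator has L\'evy measure $\delta\,\nu_S$, so condition \textbf{(H3)} still holds with the constant $c_\theta$ replaced by $\delta\,c_\theta$ and the same $\theta$. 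Likewise $K^{(i)}_s$ rescales to $\widetilde K_r=K_{t_1}\exp(-\delta\int_0^r\nabla b\cdots)$, and the two integrals in \eqref{N.L.} become $\delta\int_0^1|v^*\widetilde K_r[b,V]|^2dr$ (with a factor $\delta$ sitting inside the bracket as well, since $[b,V]$ involves $b$ which has been multiplied by $\delta$) and $\delta\int_0^1|v^*\widetilde K_r V|^2dr$. Tracking these $\delta$-powers is exactly what produces the claimed $\varepsilon_0=(t_2-t_1)^{C(\beta,p)}\varepsilon(p)$: a hypothesis ``$\int\le\varepsilon$'' on the original interval becomes ``$\int\le\varepsilon/\delta$'' on $[0,1]$, and a conclusion on $[0,1]$ of the form ``fails with probability $\le(\varepsilon/\delta)^p$ provided $\varepsilon/\delta<\varepsilon(p)$'' translates back to the stated bound once we absorb the polynomial powers of $\delta$ coming from the bracket term and from the exponent $\tfrac{1-\beta}{18-\beta}$.

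Second, having reduced to $[0,1]$, I would invoke the Norris-type lemma of \cite{Z2} for jump processes. The structure there is the standard one: if $Y_r=Y_0+\int_0^r\beta_s\,ds+\int_0^r\gamma_s\,dL_s+(\text{jump terms})$ is a semimartingale with coefficients having enough Malliavin-Sobolev regularity and moments, then smallness of $\int_0^1 Y_r^2\,dr$ forces smallness of $\int_0^1(\beta_r^2+|\gamma_r|^2)\,dr$ up to an event of small probability, with an explicit loss of exponent. Here I would apply this with $Y_r=v^*\widetilde K_r V(\widetilde X_r,i)$. Its It\^o differential is computed via the product rule: $\widetilde K_r$ has bounded variation with derivative $-\widetilde K_r\nabla b$, and $V(\widetilde X_r,i)$ picks up a drift $\delta\,\nabla V\cdot b$, a martingale part $\nabla V\cdot\sigma\,dL$, and jump contributions from the large jumps of $L$. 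The bounded-variation part of $Y_r$ is then precisely $-v^*\widetilde K_r\big(\nabla b\cdot V - \nabla V\cdot b\big)=v^*\widetilde K_r[b,V]$ times $\delta$ — which is the quantity controlled in the first integral of \eqref{N.L.} — while the martingale part is $v^*\widetilde K_r\nabla V\cdot\sigma\,dL$. Feeding this into the quantitative Norris lemma of \cite{Z2} gives exactly an implication of the type: $\int|Y|^2\le\varepsilon'$ and $\int|v^*\widetilde K[b,V]|^2\ge(\varepsilon')^{q}$ together have probability $\le(\varepsilon')^p$, for $\varepsilon'<\varepsilon(p)$, where the exponent $q=\tfrac{1-\beta}{18-\beta}$ is the one Zhang's lemma delivers under \textbf{(H3)} with index $\theta$ — this is where the peculiar constant $18-\beta$ and the restriction $\beta\in(0\vee(4\theta-7),1)$ enter, and I would simply quote them from \cite{Z2} rather than rederive them.

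The main obstacle, and the only genuinely new point over \cite{Z2}, is the \emph{uniform} control of the $\delta$-dependence. One has to verify that every moment bound used in Zhang's lemma — the $L^p$-norms of $\widetilde X_r$, of $\widetilde K_r$, of their Malliavin derivatives, and of the coefficients of $Y_r$ — is bounded \emph{uniformly in $\delta\in(0,1]$} (this is clear since $\delta\le1$ only helps: drifts shrink and the subordinator's L\'evy measure $\delta\nu_S$ has smaller mass), and conversely that the constant $c_\theta$ degrading to $\delta c_\theta$ in \textbf{(H3)} enters Zhang's estimates only through a power of $c_\theta$, so that it contributes a further power of $\delta$ to $\varepsilon_0$. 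Collecting: the conclusion threshold $(\varepsilon')^q$ with $\varepsilon'=\varepsilon/\delta$ must dominate $\varepsilon^{q}$ after reinserting the $\delta$ from the bracket term $[\delta b,V]=\delta[b,V]$, and the validity range $\varepsilon'<\varepsilon(p)$ becomes $\varepsilon<\delta\,\varepsilon(p)$; combining with the power of $\delta=t_2-t_1$ from $c_\theta\mapsto\delta c_\theta$ yields $\varepsilon_0=(t_2-t_1)^{C(\beta,p)}\varepsilon(p)$ for a suitable $C(\beta,p)>0$. Finally, since all estimates are uniform over $|v|=1$ (the vector $v$ enters only linearly and the probability bounds never see its specific value, only $|v|=1$), taking the supremum over $v$ costs nothing, and the proof is complete. $\clubsuit$
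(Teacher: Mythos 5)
Your proposal takes a genuinely different route from the paper. The paper proves the lemma by a \emph{time-shift} only: setting $\tilde X^{(i)}_s = X^{(i)}_{t_1+s}$, $\tilde K^{(i)}_s = K^{(i)}_{t_1+s}$ on $[0,t_2-t_1]$ and observing that $\tilde L_s = L_{t_1+s}-L_{t_1}$ has the \emph{same} law as $L_s$, one can invoke Zhang's Lemma~5.1 in \cite{Z2} \emph{verbatim} on the interval $[0,t_2-t_1]$ --- that lemma is already stated with the interval length appearing explicitly, yielding the bound $C_1\exp\{-C_2(t_2-t_1)\delta^{-\beta/2}\}$ with no modification of any coefficient, drift, or L\'evy measure. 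The final step is purely arithmetic: set $\varepsilon := (t_2-t_1)\delta^{(9-\beta)/2}$ and solve for $\delta$. You instead propose a \emph{time-rescaling} to $[0,1]$, which transforms the drift to $\delta\,b$, the subordinator's L\'evy measure to $\delta\,\nu_S$, and hence the constant $c_\theta$ in \textbf{(H3)} to $\delta\,c_\theta$.

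This is where your argument has a real gap, not just a stylistic difference. After the rescaling you are no longer citing Zhang's lemma as stated --- you are applying it with a \emph{modified drift} and a \emph{modified subordinator}, and you need to know precisely how every constant in that lemma (the moment bounds, the exponential small-ball estimate for $\int u\,\nu_S(du)$, the range of validity $\varepsilon<\varepsilon(p)$, the exponent $\frac{1-\beta}{18-\beta}$) depends on $c_\theta$ and on the drift norm. You assert that this dependence is ``only through a power of $c_\theta$,'' so that $c_\theta\mapsto\delta c_\theta$ contributes a polynomial factor in $\delta$. That is not self-evident: $c_\theta$ enters via the lower bound $\int_0^\varepsilon u\,\nu_S(du)\gtrsim c_\theta\,\varepsilon^{1-\theta/2}$ inside an exponential of the form $\exp\{-c\cdot(\cdots)\}$, and one must verify that the exponent does not contain an inverse power of $c_\theta$ (which, under $c_\theta\mapsto\delta c_\theta$, would destroy the polynomial structure of $\varepsilon_0$). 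Establishing this requires re-opening the proof of Zhang's lemma with the scaled parameters, which is substantive work that your proposal declares ``clear.'' The paper's time-shift avoids all of it: because $L_{t_1+\cdot}-L_{t_1}\stackrel{(d)}{=}L_\cdot$, Zhang's lemma applies with exactly its original constants, and the entire $(t_2-t_1)$-dependence is already built into the statement of that lemma. Your rescaling calculation (the law of $r\mapsto L_{t_1+\delta r}-L_{t_1}$, the factor $\delta$ on $b$, the identity $[\delta b,V]=\delta[b,V]$) is correct as far as it goes, but the step ``all constants degrade only polynomially in $\delta$'' is left unproved and is precisely what the paper's more economical change of variables makes unnecessary.
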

\begin{proof}
We first show that for any fixed $i\in \mathbb{S}$, $\beta\in(0\vee(4\theta-7), 1)$, $0\leq t_1<t_2\leq 1$, there exist
two constants $C_1\geq 1$ and $C_2\in (0, 1)$ such that for all $\delta\in(0, 1)$,
\begin{eqnarray}
&&\sup_{|v|=1}\PP\left(\int^{t_2}_{t_1}|v^{\ast}K^{(i)}_s[b, V](X^{(i)}_s, i)|^2 ds\geq (t_2-t_1)\delta^{\frac{1-\beta}{2}},\right.\nonumber\\
&&\quad\quad\quad\quad\left.\int^{t_2}_{t_1}|v^{\ast}K^{(i)}_s V(X^{(i)}_s, i)|^2 ds\leq (t_2-t_1)\delta^{9-\frac{\beta}{2}}\right)\nonumber\\
&\leq& C_1\exp\{-C_2(t_2-t_1)\delta^{-\frac{\beta}{2}}\}.\label{e.5.13}
\end{eqnarray}
In fact, by a changing of variables, we have that, for any $v\in \mathbb{R}^n$,
\begin{eqnarray}
&&\sup_{|v|=1}\PP\left( \int^{t_2}_{t_1}|v^{\ast}K^{(i)}_s[b, V](X^{(i)}_s, i)|^2 ds\geq (t_2-t_1)\delta^{\frac{1-\beta}{2}},\right.\nonumber\\
&&\quad\quad\quad\left.\int^{t_2}_{t_1}|v^{\ast}K^{(i)}_s V(X^{(i)}_s, i)|^2 ds\leq (t_2-t_1)\delta^{9-\frac{\beta}{2}}\right)\nonumber\\
&=&\sup_{|v|=1}\PP\left( \int^{t_2-t_1}_{0}|v^{\ast}\tilde{K}^{(i)}_s[b, V](\tilde{X}^{(i)}_s, i)|^2 ds\geq
(t_2-t_1)\delta^{\frac{1-\beta}{2}}, \right.\nonumber\\
&&\quad\quad\quad\left.\int^{t_2-t_1}_{0}|v^{\ast}\tilde{K}^{(i)}_s V(\tilde{X}^{(i)}_s, i)|^2 ds
\leq (t_2-t_1)\delta^{9-\frac{\beta}{2}}\right),\label{4.4}
\end{eqnarray}
where $\tilde{K}^{(i)}_s:= K^{(i)}_{t_1+s}$, $\tilde{X}^{(i)}_s:=X^{(i)}_{t_1+s}$, for $0\leq s\leq t_2-t_1$. Obviously,
\begin{eqnarray*}
\tilde{X}^{(i)}_s=X_{t_1}+\int^{s}_{0}b(\tilde{X}^{(i)}_r, i)dr+\sigma \tilde{L}_s,\quad 0\leq s\leq t_2-t_1
\end{eqnarray*}
and
\begin{eqnarray*}
\tilde{K}^{(i)}_s=K_{t_1}-\int^{s}_{0}\tilde{K}^{(i)}_r \nabla b(\tilde{X}^{(i)}_r, i)dr,\quad 0\leq s\leq t_2-t_1,
\end{eqnarray*}
where $\tilde{L}_s:=L_{t_1+s}-L_{t_1}$ is also a L\'{e}vy process and has the same distribution of $L_s$.

The estimate  (\ref{4.4}) is now changed into an
estimate for general SDE driven by subordinate Brownian motions without switching. Hence, applying \cite[Lemma 5.1]{Z2}, it is easy to see that (\ref{e.5.13}) holds.

Now, we set $\varepsilon:=(t_2-t_1)\delta^{9-\frac{\beta}{2}}$. Then noticing that $\varepsilon^{\frac{1-\beta}{18-\beta}}
\geq (t_2-t_1)\delta^{\frac{1-\beta}{2}}$, and by (\ref{e.5.13}), for any $\varepsilon\in (0, t_2-t_1)$, we have
\begin{eqnarray*}
&&\sup_{|v|=1}\PP\left( \int^{t_2}_{t_1}|v^{\ast}K^{(i)}_s[b, V](X^{(i)}_s, i)|^2 ds\geq \varepsilon^{\frac{1-\beta}{18-\beta}},
\int^{t_2}_{t_1}|v^{\ast}K^{(i)}_s V(X^{(i)}_s, i)|^2 ds\leq \varepsilon\right)\\
\leq\!\!\!\!\!\!\!\!&&C_1\exp\{-C_2(t_2-t_1)^{\frac{18}{18-\beta}}\varepsilon^{-\frac{\beta}{18-\beta}}\}.
\end{eqnarray*}
Therefore, there exists $\varepsilon_0=(t_2-t_1)^{C(\beta, p)}\varepsilon(p)$, where $C(\beta, p)$ and $\varepsilon(p)$ are two
positive constants dependent on $\beta, p$ and $p$ respectively, such that for all $\varepsilon\in(0, \varepsilon_0)$,
(\ref{N.L.}) holds. The proof is complete.
\end{proof}

\subsection{Main result}

\vspace{0.3cm}
Now we are going to study the smoothness of the density for $X_t$. The difficulty in our current situation
is that $b$ depends on the switching process $\al_t$. Following the idea in \cite{HNSX}, for any fixed $t>0$, define $N_t:=N([0, t], m(m-1)K)$, so $N_t$ is a Poisson process with parameter $m(m-1)K$, conditioned on the number of jumps of the Poisson process up to time $t$, that is, $N_t=k$,
there exists a  random interval $[T_1, T_2)$ with $0\leq T_1< T_2\leq t$ such that $T_2-T_1\geq\frac{t}{k+1}$ and
$\alpha_t=\alpha_{T_1}$ for all $ t\in[T_1, T_2)$ (because that the jump times of $\al_t$ is a subsequence of the jump times of $N_t$).
On this time interval, we will use the Lemma \ref{N.R.} above. 

First, we make the following assumption:

\noindent \textbf{(H4)} (Uniform H\"{o}rmander type condition) There exists some $j_0\in\mathbb{N}_{+}$, such that
\begin{eqnarray}
\inf_{(x, i)\in\RR^n\times \mathbb{S}}\inf_{|v|=1}\sum^{j_0}_{j=1}|v^{\ast}B_j(x, i)|^2=:\kappa_1>0,\label{UHC}
\end{eqnarray}
where $B_1(x,i)=\sigma$ and $B_{j+1}(x,i):=[b, B_j](x, i)$ for $j\in\mathbb{N}_{+}$.

For technical reasons, 
we will divide the proof into two subsections, i.e., by considering the small jumps and the large jumps separately.

\subsubsection{\bf If $S_t$ has finite moments of all orders}
In this section, we suppose that $S_t$ has finite moments of all orders and
$b\in C^{\infty}(\RR^n\times \mathbb{S})$ has bounded derivatives of all orders.

\begin{lemma} \label{l.5.3}For any $m, k\in \mathbb{N}_{+}$ with $m+k\geq 1$ and $p\geq 1$, we have
\begin{eqnarray}
\sup_{(x, \alpha)\in \RR^n\times \mathbb{S}}\EE\left( \|D^m \nabla^{k}X_t(x,\al)\|^{p}_{\HH^{\otimes m}\otimes \RR^{n^k}}\right)<+\infty. \label{4.17}
\end{eqnarray}
\end{lemma}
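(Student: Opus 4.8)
The plan is to prove \eqref{4.17} by induction on the total order $m+k$, estimating all the relevant processes together via their defining linear SDEs and Gronwall's lemma, uniformly in the starting point $(x,\alpha)$. The key observation is that, thanks to Theorem \ref{main} and Remark \ref{r.4.5} (together with the gradient theorem producing $J_t$, $K_t$), each of $X_t$, $\nabla^k X_t$, $D^m X_t$ and their mixed derivatives $D^m\nabla^k X_t$ satisfies an affine equation of the form
\[
\Xi_t = \Xi_0 + \int_0^t \nabla b(X_s,\alpha_s)\,\Xi_s\,ds + R_t,
\]
where the ``forcing'' term $R_t$ is a finite sum of products of lower-order derivatives of $b$ (all bounded by hypothesis) with lower-order derivatives of $X$, plus, in the base case $m=1$, the deterministic integral $\sigma\,d(\int_0^{S_t}h_s\,ds)$ which contributes only factors of $S_t$ — which has all moments by assumption. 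Since $\|\nabla b\|_\infty<\infty$, one gets $\sup_{s\le t}\|\Xi_s\|\le C(\sup_{s\le t}\|R_s\| )e^{\|\nabla b\|_\infty t}$ pathwise, hence $\EE[\sup_{s\le t}\|\Xi_s\|^p]\le C\,\EE[\sup_{s\le t}\|R_s\|^p]$, and the induction hypothesis controls the right-hand side.

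Concretely, I would proceed as follows. First, the base cases: $k=0,m=1$ is exactly the equation \eqref{3.1} for $D^hX_t$ in Theorem \ref{main}, whose solution obeys $\EE[\sup_{s\le t}|\psi_s^h|^p]\le C$ (noted in the proof of that theorem) with the bound depending on $\EE|S_t|^p$ only; taking suprema over directions $h$ in the unit ball of $H$ and using the explicit representation $D_sX_t = J_t K_s\sigma\mathbf 1_{s\le S_t}$ gives $\EE\|DX_t\|_{\HH\otimes\RR^n}^p<\infty$, uniformly in $(x,\alpha)$ since the bound involves only $\|\nabla b\|_\infty$, $|\sigma|$ and moments of $S_t$. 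The case $m=0,k=1$ is \eqref{e.4.1} for $J_t$, with the uniform bound $\|J_t\|\le e^{\|\nabla b\|_\infty t}$ already recorded. For the inductive step, differentiate the defining equation of $D^{m-1}\nabla^k X_t$ (resp.\ $D^m\nabla^{k-1}X_t$) once more: by the chain rule one more application of $D$ or $\nabla$ lands on $\nabla b(X_s,\alpha_s)$, producing a term $\nabla^2 b(X_s,\alpha_s)\cdot(\nabla X_s \text{ or } DX_s)\cdot(\text{lower mixed derivative})$, and possibly on the lower-order factors already present; all the $b$-derivatives that appear are bounded, so $R_t$ is a finite sum of products of processes each of which, by the induction hypothesis, lies in $L^{p'}(\Omega)$ for every $p'$. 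Hölder's inequality then gives $\EE[\sup_{s\le t}\|R_s\|^p]<\infty$, and the Gronwall estimate above closes the induction. Throughout, the bounds depend on $(x,\alpha)$ only through $|x|$ via $\EE[\sup_{s\le T}|X_s|^p]$ (finite and $x$-uniform after one localization, exactly as in Lemma \ref{l.3.2}), so one in fact gets the stated $\sup_{(x,\alpha)}$.

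The one genuinely delicate point — rather than a routine calculation — is justifying that the higher mixed Malliavin derivatives $D^m\nabla^k X_t$ exist and do satisfy the asserted affine SDEs, i.e.\ that one may differentiate under the stochastic/ODE integral repeatedly in the presence of the switching term $\alpha_t$. Here the essential input is Remark \ref{r.4.5} together with the crucial feature, emphasized in the second Remark after Theorem \ref{main}, that the perturbation $\omega_1\mapsto\omega_1+\varepsilon\int_0^\cdot h_s\,ds$ does \emph{not} move $\alpha_t$; consequently, along the drift coefficient $b(X_s,\alpha_s)$ the switching variable is frozen from the viewpoint of Malliavin differentiation, and each $D$ acts only through the smooth $x$-dependence of $b$. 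Once this structural fact is in hand — which is already established in Section~3 — the mixed equations are obtained by iterating Theorem \ref{main} and the gradient theorem, and the moment bounds follow by the elementary Gronwall argument sketched above. I would therefore spend the bulk of the write-up carefully stating the iterated equations (indexing the forcing terms by the combinatorics of which derivative hits which factor) and then dispatch the moment estimates in a single inductive paragraph.
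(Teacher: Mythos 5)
Your proposal is correct and fills in, in essentially the same way the paper intends, what its one-line proof leaves implicit: the base case $m+k=1$ is handled by Theorem \ref{main} together with the uniform bound $\|J_t\|\leq e^{\|\nabla b\|_\infty t}$, and the inductive step uses the affine structure of the iterated derivative equations (bounded $\nabla^j b$, lower-order factors controlled by induction, $S_t$-moments for the $\HH$-norms) plus Gronwall. One small imprecision worth correcting: you remark that the bounds depend on $(x,\alpha)$ ``through $\EE[\sup_{s\leq T}|X_s|^p]$,'' but that quantity is \emph{not} uniform in $x$; it is in fact irrelevant here, since for $m+k\geq 1$ the forcing terms involve only bounded derivatives of $b$ and lower-order derivatives of $X$ (never $X_t$ itself), so the resulting bounds are genuinely free of $(x,\alpha)$ and the stated supremum is automatic.
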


\begin{proof}
By Theorem \ref{main} and $\|J_t\|\leq e^{\|\nabla b\|_{\infty}t}$, we know that (\ref{4.17}) holds for $m+k=1$. For general $m$ and $k$, it follows by
similar calculations and induction method.
\end{proof}

\begin{theorem} \label{Th 5.4} Assume that conditions \textbf{(H3)} and \textbf{(H4)} hold. Then the Malliavin matrix $M_t$ is invertible
$\mathbb{P}$-a.s. and $\det(M^{-1}_t)\in L^p(\Omega)$ for all $p\geq 2$, $t\in(0, 1]$.
\end{theorem}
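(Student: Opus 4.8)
The plan is to reduce the problem to the switching-free case handled in \cite{Z2} by conditioning on the Poisson process $N_t$ and restricting attention to a random time interval on which $\alpha_s$ is constant. Concretely, for $t\in(0,1]$ fix $k\in\NN$ and work on the event $\{N_t=k\}$; as explained in the paper there is a random interval $[T_1,T_2]\subset[0,t]$ with $T_2-T_1\geq t/(k+1)$ on which $\alpha_s\equiv\alpha_{T_1}=:i$. Since $M_t=J_t\big(\int_0^t K_s\sigma\sigma^*K_s^*\,dS_s\big)J_t^*$ with $J_t,K_t$ invertible and $\|J_t\|,\|K_t\|\le e^{\|\nabla b\|_\infty}$, it suffices to bound the negative moments of the smallest eigenvalue of $C_t:=\int_0^t K_s\sigma\sigma^*K_s^*\,dS_s$, and for this it suffices to bound $\inf_{|v|=1}\int_{T_1}^{T_2}|v^*K_s\sigma|^2\,dS_s$ from below. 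The first step is therefore to show
\begin{equation*}
\PP\Big(\inf_{|v|=1}\int_{T_1}^{T_2}|v^*K_s\sigma|^2\,dS_s\le\rho,\ N_t=k\Big)\le C_{q}\,\rho^{q}
\end{equation*}
for every $q\ge1$, with constants summable against the Poisson weights $\PP(N_t=k)=e^{-\lambda t}(\lambda t)^k/k!$, $\lambda=m_0(m_0-1)K$.

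The second step is the core estimate and proceeds exactly as in the classical H\"ormander argument via the Norris-type Lemma \ref{N.R.}. On $[T_1,T_2]$, conditionally on $\alpha\equiv i$ and on the starting data $(X_{T_1},K_{T_1})$, the processes $X^{(i)}_s$ and $K^{(i)}_s$ coincide with $X_s,K_s$ and satisfy the switching-free equations in the statement of Lemma \ref{N.R.}. Using condition \textbf{(H3)}, one replaces $dS_s$ by a comparison with Lebesgue measure on small scales (this is where $\theta$ and the exponent $\beta$ enter), so that a lower bound on $\int_{T_1}^{T_2}|v^*K_s\sigma|^2\,ds$ yields one on $\int_{T_1}^{T_2}|v^*K_s\sigma|^2\,dS_s$. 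Then, starting from $B_1=\sigma$ and iterating: if $\int_{T_1}^{T_2}|v^*K_sB_j(X_s,i)|^2\,ds$ is small, Lemma \ref{N.R.} forces $\int_{T_1}^{T_2}|v^*K_sB_{j+1}(X_s,i)|^2\,ds=\int_{T_1}^{T_2}|v^*K_s[b,B_j](X_s,i)|^2\,ds$ to be small too, up to a polynomially small exceptional probability. After $j_0$ steps one contradicts the uniform H\"ormander bound \textbf{(H4)}, $\sum_{j=1}^{j_0}|v^*B_j(x,i)|^2\ge\kappa_1>0$, evaluated at $x=X_{T_1}$ (continuity and the boundedness of the $B_j$ and their derivatives let one pass from the pointwise bound to the integrated one on an interval of length $\ge t/(k+1)$, at the cost of controlling the modulus of continuity of $s\mapsto v^*K_sB_j(X_s,i)$, which is where Lemma \ref{l.3.3}-type increment bounds are used). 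A standard chaining of the finitely many exceptional events gives $\PP(\inf_{|v|=1}\int_{T_1}^{T_2}|v^*K_s\sigma|^2\,dS_s\le\rho\mid N_t=k,\,i)\le C\rho^{q}$, with the $\rho_0$-threshold of the form $(t/(k+1))^{C(\beta,q)}\varepsilon(q)$ coming from Lemma \ref{N.R.}; the sup over the finitely many $i\in\SS$ and over the starting data is harmless since all constants in \textbf{(H2)},\textbf{(H4)} are uniform.

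The third step is bookkeeping: taking expectations over the random interval and summing over $k$,
\begin{equation*}
\EE\big[(\det M_t)^{-p}\big]\le C\,\EE\Big[\big(\inf_{|v|=1}\textstyle\int_0^t|v^*K_s\sigma|^2\,dS_s\big)^{-np}\Big]\le C\sum_{k\ge0}\PP(N_t=k)\,\EE\Big[\big(\inf_{|v|=1}\textstyle\int_{T_1}^{T_2}|v^*K_s\sigma|^2\,dS_s\big)^{-np}\ \Big|\ N_t=k\Big],
\end{equation*}
and the tail bound from Step 2 gives a finite value provided the $k$-dependence $(t/(k+1))^{-C(\beta,np)}$ of the negative-moment bound is beaten by the Poisson decay $(\lambda t)^k/k!$, which it is. Invertibility $\PP$-a.s. then follows from finiteness of one negative moment. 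I expect the main obstacle to be Step 2, specifically making the transfer from the fixed-switching interval result of \cite{Z2} rigorous: one must verify that conditionally on $\{N_t=k\}$ the endpoints $T_1,T_2$ and the data $X_{T_1},K_{T_1}$ are measurable with respect to a $\sigma$-field independent of the increments of $(W,S)$ on $[T_1,T_2]$ (so that Lemma \ref{N.R.} applies with deterministic $t_1,t_2$ after conditioning), and that all constants produced are uniform in the (finitely many) switching values and in the starting data — the degeneracy of $\sigma$ makes the H\"ormander bracket iteration the genuinely delicate point, exactly as in \cite{Z2}, with the switching adding only the conditioning layer.
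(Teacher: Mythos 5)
Your proposal follows the same overall strategy as the paper's proof: condition on $\{N_t=k\}$, extract a random interval $[T_1,T_2]$ of length $\ge t/(k+1)$ on which the switching index is frozen, convert the $dS_s$-integral to a $ds$-integral on small scales via \textbf{(H3)} (Lemma 3.1 in \cite{Z1}), iterate the Norris-type Lemma \ref{N.R.} through the brackets $B_1,\dots,B_{j_0}$, invoke \textbf{(H4)} to terminate, and sum over $k$ against the Poisson weights. This matches the paper step for step, and your concern about measurability of $T_1,T_2,X_{T_1},K_{T_1}$ relative to the $(W,S)$-filtration is the same point the paper disposes of by noting that $N_t$ is independent of $(W,S)$.

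One detail in your description is off, and it would actually be problematic if carried out literally. You propose to terminate the bracket iteration by evaluating \textbf{(H4)} at the single point $x=X_{T_1}$ and then passing to the integral over $[T_1,T_2]$ by a modulus-of-continuity argument for $s\mapsto v^*K_sB_j(X_s,i)$, citing Lemma \ref{l.3.3}-type increment bounds. In the present setting $X_s$ is a jump process (the driving noise is $W_{S_t}$ with $S$ a subordinator), so there is no pathwise modulus of continuity, and Lemma \ref{l.3.3} only gives $L^p$ increment control, which would not by itself localize the lower bound from a single time point to a whole interval. Fortunately this step is unnecessary: \textbf{(H4)} is \emph{uniform} in $x$, so one simply applies it at every $s\in[T_1,T_2]$ with the unit vector $v^*K_s/|v^*K_s|$, uses $|v^*K_s|\ge\|J_s\|^{-1}\ge e^{-\|\nabla b\|_\infty s}$, and integrates directly to get
$\sum_{j=1}^{j_0}\int_{T_1}^{T_2}|v^*K_sB_j(X_s,\al_s)|^2\,ds\ge \kappa_1\,(T_2-T_1)\,e^{-2\|\nabla b\|_\infty}\ge \kappa_1 t/((k+1)e^{2\|\nabla b\|_\infty})$,
which is exactly how the paper shows $F\cap\{N_t=k\}=\emptyset$ for $\varepsilon$ small. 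Replace your continuity argument with this pointwise-uniform application of \textbf{(H4)} and the rest of your proof goes through as written.
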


\begin{proof}
We recall that $M_t=J_{t}Q_t J_{t}^{\ast}$, where $Q_t:=\int^{t}_0 K_s \sigma \sigma^{\ast} K^{\ast}_s d S_s$.
It suffices to prove $\det(Q^{-1}_t)\in L^p(\Omega)$ for all $p\geq 2$.

Recall that $\{N_t=N([0, t], m(m-1)K)\}$  is a Poisson process with parameter $\lambda:=m(m-1)K$.
For a fixed $0<t\leq 1$, conditioned on  $N_t=k$, there exists a  random interval $[T_1, T_2]\subset [0, 1]$  such that $T_2-T_1\geq\frac{t}{k+1}$
and $\alpha_s=\alpha_{T_1}$ for all $ s\in[T_1, T_2)$.

By \cite[lemma 3.1]{Z1} and for the given $\theta$ in condition \textbf{(H3)}, and using the fact that the Poisson process
$N_t$ is independent of $W, S$, for any $p\geq 2$, there exists an $\varepsilon_0=\varepsilon_0(\theta, p)>0$ such that
for all $\varepsilon\in(0, \varepsilon_0)$,
\begin{eqnarray}
&&\mathbb{P}\left\{\int^{T_2}_{T_1}|v^{\ast}K_s\sigma|^2 d S_s\leq \varepsilon\big| N_t=k\right\}\nonumber\\
\leq\!\!\!\!\!\!\!\!&&\mathbb{P}\left\{\int^{T_2}_{T_1}|v^{\ast}K_s\sigma|^2 d S_s\leq \varepsilon, \int^{T_2}_{T_1}
|v^{\ast}K_s\sigma|^2 ds\geq \varepsilon^{\theta/4}\big| N_t=k\right\}\nonumber\\
&&+\mathbb{P}\left\{\int^{T_2}_{T_1}|v^{\ast}K_s\sigma|^2 ds< \varepsilon^{\theta/4}\big| N_t=k\right\}\nonumber\\
\leq\!\!\!\!\!\!\!\!&&\exp\left\{1-\frac{1}{2\varepsilon^{1-\theta/4}}\int^{C\varepsilon}_{0}u\nu_S(du)\right\}
+\mathbb{P}\left\{\int^{T_2}_{T_1}|v^{\ast}K_s\sigma|^2 ds< \varepsilon^{\theta/4}\big| N_t=k\right\}\nonumber\\
\leq\!\!\!\!\!\!\!\!&&\exp\left\{1-\varepsilon^{-\theta/8}\right\}+\mathbb{P}\left\{\int^{T_2}_{T_1}|v^{\ast}K_s\sigma|^2 ds<
\varepsilon^{\theta/4}\big| N_t=k\right\}\nonumber\\
\leq\!\!\!\!\!\!\!\!&&\varepsilon^p+\mathbb{P}\left\{\int^{T_2}_{T_1}|v^{\ast}K_s\sigma|^2 ds<
\varepsilon^{\theta/4}\big| N_t=k\right\}.\label{5.5}
\end{eqnarray}

Now, for any fixed $\beta\in(0\vee(4\theta-7), 1)$, $j=1, 2,\ldots, j_0$, denote $m(j)=(\frac{18-\beta}{1-\beta})^{j-1}$ and define
$$
E_j:=\left\{\int^{T_{2}}_{T_1}|v^{\ast}K_s B_j(X_s, \al_s)|^2 ds<\varepsilon ^{\frac{m_j\theta}{4}}\right\}.
$$
Clearly, $\{\int^{T_2}_{T_1}|v^{\ast}K_s\sigma|^2 ds< \varepsilon^{\theta/4}\}= E_1$. Consider the decomposition
$$E_1\subseteq (E_1 \cap E_2 ^{c})\cup(E_2\cap E_3^{c})\cup\cdots\cup(E_{j_0-1}\cap E_{j_0}^{c})\cup F,$$
where $F=E_1\cap E_2\cap\cdots\cap E_{j_0}$. Then for any unit vector $v$ we have
\begin{eqnarray}
\mathbb{P}\left\{\int^{T_2}_{T_1}|v^{\ast}K_s\sigma|^2 ds<\varepsilon^{\theta/4}\big| N_t=k \right\}=\!\!\!\!\!\!\!\!&&\mathbb{P}(E_1| N_t=k)\nonumber\\
\leq\!\!\!\!\!\!\!\!&& \mathbb{P}(F| N_t=k)+\sum^{j_0-1}_{j=1}\mathbb{P}(E_j\cap E_{j+1}^{c}| N_t=k)\,.
\label{e.4.7}
\end{eqnarray}
We are going to estimate each term in the above sum. This will be done in two steps.

\textit{\bf Step 1}: First we claim that when  $\varepsilon $ is sufficiently small,
the intersection of $F$ and $\{N_t=k\}$ is empty. In fact, taking into account (\ref{UHC}), on $N_t=k$, we have
\begin{eqnarray*}
F\subset\!\!\!\!\!\!\!\!&& \left\{\sum^{j_0}_{j=1}\int^{T_{2}}_{T_1}|v^{\ast}K_s B_j(X_s, \al_s)|^2ds
\leq j_0\varepsilon ^{\frac{m_{j_0}\theta}{4}}\right\}\\
=\!\!\!\!\!\!\!\!&&\left\{\sum^{j_0}_{j=1}\int^{T_2}_{T_1}
\left(\frac{|v^{\ast}K_s B_j(X_s, \al_s)|}{|v^{\ast}K_s|}\right)^2
|v^{\ast}K_s|^2 ds\leq j_0\varepsilon ^{\frac{m_{j_0}\theta}{4}}\right\}\\
\subset\!\!\!\!\!\!\!\!&&\left\{\frac{\kappa_1 t}{(k+1)e^{2\|\nabla b\|_{\infty}}}\leq j_0\varepsilon ^{\frac{m_{j_0}\theta}{4}}\right\}, \label{e.4.9}
\end{eqnarray*}
because that $|v^{\ast}K_s|\geq \frac{1}{\|J_{s}\|}\geq \frac{1}{e^{\|\nabla b\|_{\infty}}}$, for any $s\in(0,1]$. Thus $F\cap\{N_t=k\}=\emptyset$,
provided $\varepsilon < \varepsilon_1:=\left(\frac {\kappa_1 t}{j_0 e^{2\|\nabla b\|_{\infty}}(k+1)}\right)^{ \frac 4{ m(j_0)\theta}}$.

\medskip
\textit{\bf Step 2}: We shall bound the second terms in (\ref{e.4.7}). For any $j=1, 2, \ldots, j_0-1$, we have
\begin{eqnarray*}
\mathbb{P}(E_j\cap E_{j+1}^{c}| N_t=k)=\!\!\!\!\!\!\!\!&&
\mathbb{P}\left\{\int^{T_{2}}_{T_1}|v^{\ast}K_s B_j(X_s, \al_s)|^2 ds<\varepsilon ^{\frac{m_{j}\theta}{4}}\right.,\\
 &&\hspace{1cm}\left.\int^{T_{2}}_{T_1}|v^{\ast}K_s B_{j+1}(X_s, \al_s)|^2 ds\geq \varepsilon ^{\frac{m_{j+1}\theta}{4}}| N_t=k\right\}\\
=\!\!\!\!\!\!\!\!&& \mathbb{P}\left\{\int^{T_{2}}_{T_{1}}|v^{\ast}K^{(\al_{T_1})}_{s}B_j(X^{(\al_{T_1})}_s, \alpha_{T_1})|^2ds
\leq \Big(\varepsilon ^{\frac{m_{j+1}\theta}{4}}\right)^{\frac{1-\beta}{18-\beta}},\\
&&\left.\hspace{1cm}\int^{T_{2}}_{T_1}|v^{\ast}K^{(\al_{T_1})}_s B_{j+1}(X^{(\al_{T_1})}_s, \al_{T_1})|^2 ds
\geq \varepsilon ^{\frac{m_{j+1}\theta}{4}}| N_t=k\right\}\,.
\end{eqnarray*}
Recall that $T_2-T_1\geq\frac{t}{k+1}$ and that processes $N_t$ and  $L_t$ are independent, by using Lemma \ref{N.R.}, we obtain
\begin{equation}
\mathbb{P}(E_j\cap E_{j+1}^{c}| N_t=k)\leq\varepsilon^{p},\label{e.5.19}
\end{equation}
for $0<\varepsilon\leq \varepsilon_2=(\frac{t}{k+1})^{C(p)}\varepsilon(p)$, where $C(p)$ and $\varepsilon(p)$ are two positive constants dependent on $p$.

Hence, by (\ref{5.5})-(\ref{e.5.19}), we have
$$
\mathbb{P}\{v^{\ast}Q_t v\leq \varepsilon| N_t=k \}\leq\mathbb{P}\left\{\int^{T_2}_{T_1}|v^{\ast}K_s\sigma|^2 d S_s
\leq \varepsilon\big| N_t=k\right\}\leq \varepsilon^{p}
$$
for $\varepsilon<\min\{\varepsilon_0, \varepsilon_1, \varepsilon_2\}$.
Then, following the steps of \cite[Lemma 2.3.1]{N}, we can obtain that
$$
\mathbb{P}\left\{\inf_{|v|=1}v^{\ast}Q_t v\leq \varepsilon\big| N_t=k\right\}\leq \varepsilon^{p}
$$
for all $0<\varepsilon\leq C_1(\frac{t}{k+1})^{C_2} $ and for all $p\geq 2$, where $C_1, C_2$ are two positive
constants depending on $p$ and $n$. By the fact that $\det(Q_t)\geq (\inf_{|v|=1}v^{\ast}Q_t v)^n$, we have
\begin{eqnarray}
\EE  |\det(Q_t)|^{-p}  &\leq & \EE  \left(\inf_{|v|=1}v^{\ast}Q_t v\right)^{-np}\nonumber \\
&\leq & \sum^{\infty}_{k=0}\mathbb{P}(N_t=k)\EE\left( (\inf_{|v|=1}v^{\ast}Q_t v)^{-np}\big|N_t=k\right)\nonumber\\
&\leq &\sum^{\infty}_{k=0}\frac{\lambda^k}{k!}e^{\lambda}\left[  C_1 \left( \frac t{k+1} \right)^{C_2}
+  \frac{1}{C_1} \left( \frac {k+1}{t} \right)^{C_2} \right] <\infty.\label{M_t}
\end{eqnarray}
The proof is now complete.
\end{proof}

Now we can prove the following gradient estimate.

\begin{theorem}
For any $k, m\in\mathbb{N}$ with $k+m\geq 1$, there are $\gamma_{k, m}>0$ and $C=C_{k, m}>0$ such that for all
$f\in C^{\infty}_{b}(\RR^n\times \mathbb{S})$ and $t\in(0,1)$,
\begin{eqnarray}
|\nabla^{k}\EE(\nabla^m f)(X_t, \alpha_t)|\leq C\|f\|_{\infty}t^{-\gamma_{k, m}}. \label{4.18}
\end{eqnarray}
\end{theorem}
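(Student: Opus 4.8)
The plan is to follow the now-standard Malliavin-calculus recipe for gradient estimates of semigroups, using the integration-by-parts formula on Wiener space together with the non-degeneracy of the Malliavin covariance matrix established in Theorem~\ref{Th 5.4}. First I would recall the abstract integration-by-parts principle: if $F\in\mathbb{D}^{\infty}$ is a non-degenerate random vector (meaning its Malliavin covariance matrix $M_F$ has $\det(M_F^{-1})\in L^p$ for all $p$) and $G\in\mathbb{D}^{\infty}$, then for every smooth $\varphi$ and every multi-index there is a random variable $H$, built from $M_F^{-1}$, $DF$, $DG$ and their iterated derivatives via the divergence operator $\delta$, such that $\mathbb{E}[(\partial\varphi)(F)G]=\mathbb{E}[\varphi(F)H]$, with $\|H\|_{L^p}$ controlled by Sobolev norms of $F$, $G$ and $L^q$-norms of $\det(M_F^{-1})$. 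Here the underlying Gaussian space is $(\Omega_1,\mathscr{F}_1,\mathbb{P}_1)$ with the Cameron--Martin space $H$, and all random variables are viewed as taking values in $V=L^2(\Omega_2\times\Omega_3)$ as set up in Section~3; the relevant $F$ is $X_t$ itself.

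The key steps, in order, are: (i) Apply the chain rule (the Chain rule theorem in Section~3) to write $(\nabla^m f)(X_t,\alpha_t)$ and integrate by parts in the $x$-variable at the level of the semigroup. Concretely, for the gradient in $x$ one uses $\nabla_x\mathbb{E}f(X_t(x),\alpha_t)=\mathbb{E}[\nabla f(X_t,\alpha_t)J_t]$, and then the Malliavin integration-by-parts formula converts each factor $\nabla f(X_t,\alpha_t)$ into $\mathbb{E}[f(X_t,\alpha_t)\,\Phi_t]$ for a suitable $\Phi_t\in\bigcap_p L^p$; iterating $k+m$ times produces $|\nabla^k\mathbb{E}(\nabla^m f)(X_t,\alpha_t)|\le\|f\|_\infty\,\mathbb{E}|\Psi_t|$ where $\Psi_t$ is a polynomial expression in $M_t^{-1}$, $J_t$, $K_t$, iterated Malliavin derivatives $D^j X_t$, and divergences thereof. (ii) Bound $\mathbb{E}|\Psi_t|$: by H\"older's inequality this reduces to controlling (a) the Sobolev norms $\|X_t\|_{k',p}$, $\|J_t\|_{k',p}$, which are finite and bounded uniformly in the initial data by Lemma~\ref{l.5.3} and the bounds $\max\{\|J_t\|,\|K_t\|\}\le e^{\|\nabla b\|_\infty t}$; and (b) the negative moments $\mathbb{E}|\det M_t^{-1}|^q$, which are finite by Theorem~\ref{Th 5.4}. (iii) Track the dependence on $t$: the only source of blow-up as $t\to0$ is the inverse Malliavin matrix, since $M_t=J_t Q_t J_t^*$ with $Q_t=\int_0^t K_s\sigma\sigma^* K_s^* dS_s$ and $Q_t$ degenerates as $t\downarrow0$. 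From the proof of Theorem~\ref{Th 5.4}, one has $\mathbb{P}(\inf_{|v|=1}v^*Q_t v\le\varepsilon)\le\varepsilon^p$ for $\varepsilon\le C_1 t^{C_2}$, whence $\mathbb{E}|\det Q_t^{-1}|^q\le C t^{-C_2 n q}$ for a constant independent of $t\in(0,1)$; substituting the polynomial order of $M_t^{-1}$ appearing in $\Psi_t$ (which depends only on $k,m,n$) yields the exponent $\gamma_{k,m}$ and the constant $C_{k,m}$.

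I would carry this out by induction on $k+m$: the base case $k+m=1$ is the single integration by parts just described, and the inductive step applies the same argument to $\nabla^{k-1}\mathbb{E}(\nabla^{m+1}f)(X_t,\alpha_t)$ or to one further $x$-differentiation, absorbing the extra $J_t$ and extra iterated Malliavin derivative into the Sobolev estimates of step (ii) at the cost of a larger $\gamma$. One technical point worth flagging explicitly: because the ambient space is the product space and only the Wiener factor is perturbed, all of the Malliavin calculus (and the duality $\mathbb{E}_1[\langle DF,u\rangle]=\mathbb{E}_1[\langle F,\delta(u)\rangle]$) is carried out fibrewise on $\Omega_1$ with values in $V$, and the switching variable $\alpha_t$ is \emph{not} differentiated — this is exactly the content of Remark~\ref{r.4.5} and of the chain rule, so no extra work is needed to handle $\alpha_t$ beyond noting that $\nabla f(X_t,\alpha_t)$ means the $x$-gradient at the (random) value $\alpha_t$.

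The main obstacle is step (iii): making the $t$-dependence quantitative and uniform. The delicate part is that the negative-moment bound for $Q_t$ in Theorem~\ref{Th 5.4} was obtained by conditioning on $N_t=k$ and summing a series $\sum_k \frac{\lambda^k}{k!}e^{\lambda}[C_1(t/(k+1))^{C_2}+C_1^{-1}((k+1)/t)^{C_2}]$, so one must check that the resulting constant in $\mathbb{E}|\det Q_t^{-1}|^q\le C t^{-\gamma}$ genuinely does not depend on $t\in(0,1)$ (the series converges because of the $1/k!$, uniformly in $t\le1$), and that raising $Q_t^{-1}$ to the power needed by $\Psi_t$ only multiplies the exponent $C_2$ by a fixed integer determined by $k$, $m$ and $n$. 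Everything else — the explicit form of $\Psi_t$, the H\"older bookkeeping, the uniform Sobolev bounds — is routine given the results already proved in Sections~3 and~4.
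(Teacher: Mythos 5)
Your proposal is correct and follows essentially the same route as the paper: expand $\nabla^{k}\EE(\nabla^{m}f)(X_t,\alpha_t)$ via the chain rule into terms $\EE[(\nabla^{m+j}f)(X_t,\alpha_t)G_j(\nabla X_t,\dots,\nabla^k X_t)]$, then remove the remaining derivatives of $f$ by Malliavin integration by parts (the duality relation, following Nualart's Proposition~2.1.4), bound the resulting weight by H\"older together with the uniform Sobolev estimates of Lemma~\ref{l.5.3} and the $e^{\|\nabla b\|_\infty t}$ bounds on $J_t,K_t$, and finally absorb the inverse Malliavin matrix into $\EE|(\det M_t)^{-1}|^p\le Ct^{-\gamma_{k,m}}$ via the estimate \eref{M_t} from Theorem~\ref{Th 5.4}. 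The points you flag explicitly — that the IBP is carried out fibrewise on $\Omega_1$ with $\alpha_t$ left untouched, and that the $t$-dependence is extracted from the series in \eref{M_t} (factor out $t^{-C_2}$, the $1/k!$ kills the $(k+1)^{C_2}$ growth) — are indeed the two small technical checks the paper glosses over with ``cumbersome calculation,'' so your account is, if anything, a bit more honest than the original.
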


\begin{proof}
By the chain rule, we have
\begin{eqnarray*}
\nabla^{k}\EE(\nabla^m f)(X_t, \alpha_t)=\!\!\!\!\!\!\!\!&&\sum^{k}_{j=1}\EE\left((\nabla^{m+j}f)(X_t,\alpha_t)
G_j(\nabla X_t,\ldots, \nabla^{k}X_t)\right)
\end{eqnarray*}
where $\{G_j, j=1,\ldots, k\}$ are real polynomial functions. By the duality relationship, the chain rule, Lemma \ref{l.5.3} and H\"{o}lder's
inequality, through cumbersome calculations (for details see the argument in \cite[Proposition 2.1.4]{N}), one finds that there exist integer
$p=p_{k, m}$, $C=C_{k,m}>0$ and $\gamma_{k, m}>0$ such that for all $t\in(0,1)$,
$$
|\nabla^{k}\EE(\nabla^m f)(X_t, \alpha_t)|\leq C\|f\|_{\infty}\EE|(\text{det} M_t)^{-1}|^{p}
\leq C\|f\|_{\infty}t^{-\gamma_{k, m}},
$$
where the last inequality follows by (\ref{M_t}). The proof is complete.
\end{proof}

\subsubsection{\bf Without the assumption of $S_t$ has finite moments of all orders.}

Let $S'_t$ be a subordinator with L\'{e}vy measure
$1_{(0,1)}\nu_{S}(du)$ and independent of $(W_t)_{t\geq 0}$ and $N(dt,dy)$. Let $X'_t$ solve the following equations:
\begin{equation}
dX'_t=b(X'_t, \alpha_t)dt+\sigma dW_{S'_t},~~~~\ (X'_0\,,\al_0)=(x,\al)\in \RR^n \times \mathbb{S} \,,
\end{equation}
where $\alpha_t$ is the one in $(\ref{alpha})$. Let us write
$$
P'_tf(x, \al):=\EE f(X'_t(x), \al_t(\al)).
$$
Notice that $S'_t$ has finite moments of all order, then the results above hold for the process $(X'_t,\al_t)$.
We now proceed to find the relation between the semigroups $P_t$ and $P'_t$, so that we can estimate the semigroup $P_t$
via $P'_t$.

Following the steps in \cite[Section 3.3]{Z1}, we first give two lemmas whose proofs are almost the same
with Lemmas 3.9 and 3.10 in \cite{Z1}, so we omit the proof.

\begin{lemma}
Let $f\in C^{\infty}_b(\RR^n\times \mathbb{S})$. For any $m\in \mathbb{N}$, there exists a constant $C_m\geq 1$
such that for all $(x, \al)\in\RR^n\times \mathbb{S}$,
\begin{equation}
|\nabla^{m}P'_tf(x,\al)|\leq C_m\sum^{m}_{k=1}P'_t|\nabla^{k}f|(x,\al).\label{4.22}
\end{equation}
\end{lemma}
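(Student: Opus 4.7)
The strategy is to combine a Faà di Bruno expansion of $\nabla^{m}[f(X'_{t}(x),\alpha'_{t}(\alpha))]$ with deterministic almost sure bounds on the spatial derivatives $\nabla^{j}X'_{t}(x)$. I would first verify, by induction on $j$, that each such derivative satisfies a linear random ODE whose coefficients are bounded; the case $j=1$ is the already established bound $\|J_{t}\|\le e^{\|\nabla b\|_{\infty}t}$, and for $j\ge 2$ the equation for $\nabla^{j}X'_{t}$ is obtained by formally differentiating $J_{t}$ and carries a forcing term that is polynomial in $\nabla^{\ell}b(X'_{s},\alpha'_{s})$ for $\ell\le j$ and in $\nabla^{\ell}X'_{s}$ for $\ell<j$. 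Since $b$ has bounded partial derivatives of all orders, Gronwall's inequality produces a deterministic bound $\|\nabla^{j}X'_{t}\|_{\infty}\le C_{j}$ uniformly on $t\in[0,1]$.

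Next, I would establish the Faà di Bruno expansion
\[
\nabla^{m}P'_{t}f(x,\alpha)=\sum_{k=1}^{m}\mathbb{E}\left[(\nabla^{k}f)(X'_{t},\alpha'_{t})\,G_{k,m}\bigl(\nabla X'_{t},\ldots,\nabla^{m}X'_{t}\bigr)\right],
\]
where each $G_{k,m}$ is a universal polynomial whose monomials have total differentiation order $m$. The subtle point here is that, a priori, $\alpha'_{t}(\alpha)$ also depends on $x$ through the intervals $\triangle_{ij}(X'_{s-})$. The key observation is that, since the Poisson measure $N$ is independent of the Brownian motion and its atoms almost surely avoid the boundary points of the level-set partition, there is a random $\delta(\omega)>0$ on a set of full measure such that the jump trajectory of $\alpha'_{t}(\alpha)$ is unchanged under any perturbation of $x$ of magnitude less than $\delta(\omega)$. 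Consequently, the $m$-th derivative of the composition falls only on the first argument of $f$, and the standard chain rule yields the claimed expansion.

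Finally, inserting the deterministic bound $|G_{k,m}|\le \|G_{k,m}\|_{\infty}$ into the above identity and applying the triangle inequality gives
\[
|\nabla^{m}P'_{t}f(x,\alpha)|\le\sum_{k=1}^{m}\|G_{k,m}\|_{\infty}\,\mathbb{E}|\nabla^{k}f|(X'_{t},\alpha'_{t})\le C_{m}\sum_{k=1}^{m}P'_{t}|\nabla^{k}f|(x,\alpha),
\]
which is the desired estimate (with $C_{m}\ge 1$ obtained by enlarging constants if necessary). The main technical obstacle is the rigorous verification that $x\mapsto\alpha'_{t}(\alpha)$ is locally constant almost surely; once this is granted, the remainder of the argument is a routine chain-rule calculation combined with linear Gronwall-type estimates for the flow derivatives.
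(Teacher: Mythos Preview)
Your approach---Fa\`a di Bruno expansion plus deterministic Gronwall bounds on the flow derivatives $\nabla^{j}X'_{t}$---is exactly what the paper intends: it omits the proof and points to Lemma~3.9 of \cite{Z1}, whose argument (in the non-switching setting) is precisely this chain-rule-plus-flow-bounds computation.

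The one caveat concerns the step you yourself flag as the main obstacle, the pathwise ``locally constant'' claim for $x\mapsto\alpha'_{t}$. That the Poisson marks $z_{k}$ a.s.\ avoid the finitely many endpoints of the intervals $\Delta_{ij}(X'_{\tau_{k}-}(x))$ is correct, but to conclude that a small perturbation of $x$ keeps each $z_{k}$ in the same interval you also need those endpoints---cumulative sums of the rates $q_{ij}(X'_{\tau_{k}-}(x))$---to vary continuously in $x$, which requires continuity of the $q_{ij}$. The paper only assumes the $q_{ij}$ are Borel measurable and bounded, so this pathwise route is not available in general. Within the paper's framework the cleaner justification is the $L^{p}$ one already set up in Section~3: the mean-square $x$-differentiability of $X'_{t}$ (Theorem~3.3, extended to higher orders as in Lemma~4.2) is obtained with the switching contribution absorbed into a $\phi^{\varepsilon}$-type remainder that vanishes in $L^{p}$ via the argument of Lemma~3.3. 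Your Fa\`a di Bruno identity then holds after differentiating the expectation in this sense, and the deterministic bounds on $\nabla^{j}X'_{t}$ finish the proof exactly as you describe.
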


\begin{lemma}\label{l.5.7}
Let $J'_t:=\nabla X'_t(x)$ and $K'_t$ be the inverse of matrix $J'_t$. Let $f\in C^{\infty}_b(\RR^n\times \mathbb{S})$.
Then for any $j=1,\ldots, n$, we have the following formula:
\begin{equation}
P'_t(\partial_{j}f)(x,\al)= \text{div}~~Q^{\cdot j}(t,x,\al; f)-G^{j}(t, x,\al; f),
\end{equation}
where $\text{div} f(x)=\sum^{n}_{i=1}\partial_{x_i} f_i(x)$, for any $f(x)=(f_1(x), f_2(x), \ldots, f_n(x))\in C^{\infty}_b(\RR^n, \RR^n)$, and
\begin{equation}
Q^{i j}(t,x,\al; f):=\EE\big(f(X'_t(x),\al'_t(\al))(K'_t)_{ij}\big)\label{Q}
\end{equation}
and
\begin{equation}
G^{j}(t, x,\al; f):=\EE\big(f(X'_t(x),\al'_t(\al))\text{div}(K'_t)_{\cdot j}\big).\label{G}
\end{equation}
\end{lemma}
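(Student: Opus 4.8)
The plan is to establish the divergence formula $P'_t(\partial_j f) = \mathrm{div}\, Q^{\cdot j}(t,x,\al;f) - G^j(t,x,\al;f)$ by a Malliavin integration-by-parts argument performed in the direction of the spatial variable $x$, exploiting the gradient process $J'_t = \nabla X'_t(x)$ and its inverse $K'_t$ established earlier in the excerpt. First I would note that $\partial_j f(X'_t,\al'_t)$ can be rewritten via the chain rule: since the switching variable $\al'_t$ is not modified under perturbations of $x$ in the relevant limiting sense (it enters only through the law of $X'$), the relevant object is $\nabla_x \big(f(X'_t(x),\al'_t(\al))\big) = \nabla f(X'_t,\al'_t)\, J'_t$, so that $\nabla f(X'_t,\al'_t) = \big(\nabla_x f(X'_t(x),\al'_t(\al))\big) K'_t$, and in particular $\partial_j f(X'_t,\al'_t) = \sum_i \partial_{x_i}\big(f(X'_t(x),\al'_t(\al))\big) (K'_t)_{ij}$.

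Next I would take expectations and integrate by parts in the variable $x$. Writing $\EE\big(\partial_j f(X'_t,\al'_t)\big) = \sum_i \EE\big(\partial_{x_i}[f(X'_t(x),\al'_t(\al))]\,(K'_t)_{ij}\big)$, I would move the derivative $\partial_{x_i}$ off the term $f(X'_t(x),\al'_t(\al))$ and onto the product with $(K'_t)_{ij}$. This requires justifying that $x \mapsto \EE\big(f(X'_t(x),\al'_t(\al))(K'_t)_{ij}\big) = Q^{ij}(t,x,\al;f)$ is differentiable in $x$ with $\partial_{x_i}Q^{ij} = \EE\big(\partial_{x_i}[f(X'_t(x),\al'_t(\al))](K'_t)_{ij}\big) + \EE\big(f(X'_t(x),\al'_t(\al))\,\partial_{x_i}(K'_t)_{ij}\big)$. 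Summing over $i$ and identifying $\sum_i \partial_{x_i}(K'_t)_{ij} = \mathrm{div}(K'_t)_{\cdot j}$, this rearranges exactly to $\EE(\partial_j f(X'_t,\al'_t)) = \mathrm{div}\, Q^{\cdot j}(t,x,\al;f) - G^j(t,x,\al;f)$, which is the claimed identity. The differentiability in $x$ of the maps $x\mapsto X'_t(x)$, $x\mapsto K'_t$, and hence $x\mapsto f(X'_t(x),\al'_t(\al))(K'_t)_{ij}$ in the mean-square sense follows from the gradient theorem quoted earlier in the excerpt (together with Remark~\ref{r.4.5} for higher differentiability and Lemma~\ref{l.5.3}-type bounds adapted to $(X',\al')$, which holds since $S'_t$ has finite moments of all orders), and the interchange of $\partial_{x_i}$ with $\EE$ is justified by dominated convergence using these uniform moment bounds together with the boundedness of $f$ and $\nabla f$.

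Since the excerpt explicitly says the proof is "almost the same" as Lemma 3.10 in \cite{Z1}, I would keep the writeup short, stressing only the two points where the switching could in principle cause trouble: (1) that $\al'_t(\al)$ is held fixed as a function of the randomness when we differentiate in $x$, so no extra term from the switching appears — this is exactly the phenomenon recorded in the remark after Theorem~\ref{main} and in the gradient theorem, where $J'_t$ solves a linear ODE with no jump contribution; and (2) that the conditioning-on-jump-times machinery is not needed here at all, since this lemma is purely a chain-rule/integration-by-parts statement and the hard probabilistic estimates (negative moments of the Malliavin matrix) were already handled in Theorem~\ref{Th 5.4}. The main obstacle, such as it is, is bookkeeping: carefully tracking the index $j$ through the $\mathrm{div}$ operations and verifying that the boundary-type terms vanish (they do, because everything is an expectation over the whole probability space and there is no spatial boundary — the "divergence" here is the ordinary divergence of the deterministic functions $Q^{\cdot j}$ and $(K'_t)_{\cdot j}$ in the parameter $x$, not a stochastic divergence), and confirming the requisite differentiability and moment bounds transfer from $(X_t,\al_t)$ to $(X'_t,\al'_t)$ verbatim. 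Given all of this is routine modulo citing \cite{Z1}, I would present the proof in two or three lines and refer the reader to Lemma 3.10 of \cite{Z1} for the omitted estimates.
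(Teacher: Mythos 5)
Your proposal is correct and takes essentially the same approach the paper intends: since the paper explicitly defers to Lemma 3.10 of \cite{Z1}, whose argument is precisely the chain-rule identity $(\partial_j f)(X'_t,\al'_t)=\sum_i\partial_{x_i}[f(X'_t(x),\al'_t(\al))](K'_t)_{ij}$ followed by moving the $x$-derivative outside the expectation via the product rule and identifying $\sum_i\partial_{x_i}(K'_t)_{ij}=\mathrm{div}(K'_t)_{\cdot j}$, your derivation reproduces that argument faithfully, with the needed differentiability in $x$ supplied by the gradient theorem and Remark~\ref{r.4.5} applied to $(X',\al')$ (valid since $S'_t$ has all moments).
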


Now, let $\{\tau'_1, \tau'_2,\ldots, \tau'_n,\ldots\}$ and $\{\xi_1,\xi_2,\ldots, \xi_n,\ldots\}$ be two independent
families of independent and identically distributed $\RR^{+}$-valued random variables and 
$\RR^d$-valued random variables respectively, which are also independent of
$(W_t, S'_t)_{t\geq 0}$ and $N(dt, du)$. We assume that $\tau'_1$ has the exponential distribution of parameter
$\lambda_1:=\nu_{S}([1,\infty))$ and $\xi_1$ has the distributional density
$$
\frac{1}{\nu_{S}([1,\infty))}\int^{\infty}_{1}(2\pi s)^{-d/2}e^{-\frac{|x|^{2}}{2s}}\nu_{S}(ds).
$$
Set $\tau'_0:=0$ and $\xi_0:=0$, define
$$
N'_t:=\max\{k\geq 0:\tau'_0+\tau'_1+\cdots+\tau'_k\leq t\}=\sum^{\infty}_{k=0}1_{\{\tau'_0+\tau'_1+\cdots+\tau'_k\leq t\}}
$$
and
$$
H_t:=\xi_0+\xi_1+\cdots+\xi_{N'_t}=\sum^{N'_t}_{j=0}\xi_j.
$$
Then $H_t$ is a compound Poisson process with L\'{e}vy measure
$$
\nu_{H}(\Gamma)=\int^{\infty}_1 (2\pi s)^{-d/2}\left(\int_{\Gamma}e^{-\frac{|y|^2}{2s}}dy\right)\nu_{S}(ds).
$$
Moreover, it is easy to see that $H_t$ is independent of $W_{S'_t}$ and
\begin{equation}
(\sigma W_{S_t})_{t\geq 0}\stackrel{(d)}=(\sigma W_{S'_t}+\sigma H_t)_{t\geq 0}.\label{4.24}
\end{equation}

Let $\hbar_t$ be a c\`{a}dl\`{a}g purely discontinuous $\RR^n$-valued function with finite many jumps and $\hbar_0=0$.
Let $(X^{\hbar}_t(x), \al_t(\al))$ solve the following equations:
\begin{equation}
dX^{\hbar}_t=b(X^{\hbar}_t, \alpha_t)dt+\sigma d W_{S'_t}+d\hbar_t,,\ (X^{\hbar}_0\,,
\al_0)=(x,\al)\in \RR^n \times \mathbb{S} \,.
\end{equation}
Let $k$ be the jump number of $\hbar$ before time $t$. Let $0=t_0<t_1<t_2<\cdots<t_k\leq t$ be the jump times of $\hbar$.
By the Markovian property of $(X^{\hbar}_t(x), \al_t(\al))$, we have the following formula:
$$
\EE f(X^{\hbar}_t(x), \al_t(\al))=P'_{t_1}\cdots\theta_{\Delta \hbar_{t_{k-1}}}P'_{t_k-t_{k-1}}
\theta_{\Delta \hbar_{t_k}}P'_{t-t_k}f(x,\al),
$$
where
$$
\theta_{y}f(x,\al):=f(x+y, \al).
$$
Now, by (\ref{4.24}) we have
$$
(X_t(x),\al_t(\al))\stackrel{(d)}=(X^{\hbar}_t(x),\al_{t}(\al))\big|_{\hbar=\sigma H}.
$$
Hence,
\begin{eqnarray*}
P_t f(x,\alpha)=\!\!\!\!\!\!\!\!&&\mathbb{E} f(X_t(x),\alpha_t(\alpha))\\
=\!\!\!\!\!\!\!\!&&\mathbb{E} \left(f(X^{\hbar}_t(x),\alpha_t(\alpha))\big|_{\hbar=\sigma H}\right)\\
=\!\!\!\!\!\!\!\!&&\sum^{\infty}_{k=0}\mathbb{E}\left(P'_{\tau_1}\cdots\theta_{\sigma \xi_{k-1}}P'_{\tau'_k}
\theta_{\sigma \xi_{k}}P'_{t-(\tau'_1+\cdots+\tau'_k)}f(x,\alpha), N'_t=k\right).
\end{eqnarray*}
In view of
$$
\{N'_t=k\}=\{\tau'_1+\cdots+\tau'_k\leq t<\tau'_1+\cdots+\tau'_k+\tau'_{k+1}\},
$$
we further have
\begin{eqnarray}
P_t f(x,\alpha)=\!\!\!\!\!\!\!\!&& \sum^{\infty}_{k=0}\bigg\{\int_{\sum_{i=1}^kt_i\leq t<\sum_{i=1}^{k+1}t_i}
\mathbb{E}(P'_{t_1}\cdots\theta_{\sigma \xi_{k-1}}P'_{t_k}\theta_{\sigma \xi_{k}}P'_{t-\sum_{i=1}^kt_i}f(x,\alpha))\nonumber\\
&&\quad\quad\times\lambda^{k+1}_1e^{-\lambda_1\sum_{i=1}^{k+1}t_i}d t_1\cdots d t_{k+1}\bigg\}+P'_t f(x,\alpha)\mathbb{P}(N'_t=0)\nonumber\\
=\!\!\!\!\!\!\!\!&&\sum^{\infty}_{k=1}\bigg\{\lambda^{k}_1e^{-\lambda_1 t}\int_{\sum_{i=1}^kt_i\leq t}
\mathbb{E} \emph{I}^{\sigma \xi}_{f}(t_1,\ldots, t_k, t, x, \alpha)d t_1\cdots d t_k\bigg\}\nonumber\\
&&+P'_t f(x,\alpha)e^{-\lambda_1 t},\label{4.27}
\end{eqnarray}
where $\xi:=(\xi_1,\ldots, \xi_k)$ and $\emph{I}^{~\textbf{y}}_{f}(t_1,\ldots, t_k, t, x, \alpha)=P'_{t_1}\cdots\theta_{y_{k-1}}P'_{t_k}\theta_{y_{k}}P'_{t-(t_1+\cdots+t_k)}f(x,\alpha)$,
with $\textbf{y}:=(y_1, \ldots, y_k)$.

\vspace{0.3cm}

Our main theorem is following:
\begin{theorem}\label{SM density}
Let $b\in C^{\infty}(\RR^n\times \mathbb{S}; \RR^n)$ with bounded partial derivatives of all orders. Suppose conditions \textbf{(H3)} and \textbf{(H4)} hold.
Then for any $t\in (0,1]$, $X_t$ has a smooth density with respect to the Lebesgue measure on $\RR^n$.
\end{theorem}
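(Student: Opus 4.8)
The plan is to transfer the smoothness of the density from the auxiliary process $(X'_t,\alpha'_t)$ — which is driven by the subordinated Brownian motion $W_{S'_t}$ with the truncated L\'evy measure $1_{(0,1)}\nu_S(du)$, hence has a subordinator with all moments finite — to the original process $X_t$ via the series representation \eref{4.27}. By the results of Subsection 4.1 (which apply verbatim to $(X'_t,\alpha'_t)$ since $S'_t$ has moments of all orders, and since $b$ has bounded derivatives of all orders and \textbf{(H3)}, \textbf{(H4)} hold), the gradient estimate \eref{4.18} holds for $P'_t$: for all $k,m$ with $k+m\ge 1$, $|\nabla^k P'_t(\nabla^m f)(x,\al)|\le C\|f\|_\infty t^{-\gamma_{k,m}}$ for $t\in(0,1)$, and Lemma \ref{l.5.7} gives the integration-by-parts formula expressing $P'_t(\partial_j f)$ as $\mathrm{div}\,Q^{\cdot j}-G^j$ with $Q,G$ as in \eref{Q}, \eref{G}. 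The goal is to show $P_tf(x,\al)=\xint{f(y,\al)p_t(x,y,\al)\dy}$ plus contributions that are smooth in $x$, with $p_t$ smooth, for every $f\in C_b^\infty(\RR^n\times\SS)$; since smoothness of the density is a local statement, it suffices to establish that $x\mapsto P_tf(x,\al)$ is, for each fixed $\al$, obtained by testing $f$ against a function smooth in its first argument, with bounds uniform over $f$ with $\|f\|_\infty\le 1$.

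First I would fix $t\in(0,1)$ and $\al$, and differentiate the series \eref{4.27} term by term in $x$. The leading term $P'_tf(x,\al)e^{-\lambda_1 t}$ is handled directly by \eref{4.18} (with $m=0$): it is $C^\infty$ in $x$ with all derivatives bounded by $Ce^{-\lambda_1 t}\|f\|_\infty t^{-\gamma_{k,0}}$. For the $k$-th term in the sum, the integrand is $\EE\, I^{\sigma\xi}_f(t_1,\dots,t_k,t,x,\al)$, a nested composition $P'_{t_1}\theta_{\sigma\xi_1}P'_{t_2}\cdots\theta_{\sigma\xi_k}P'_{t-\sum t_i}f$; only the \emph{outermost} operator $P'_{t_1}$ (applied last, evaluated at $x$) carries $x$-dependence, so $\nabla_x^k I^{\sigma\xi}_f = \nabla_x^k\big(P'_{t_1} g\big)(x,\al)$ where $g$ is the bounded measurable function obtained by carrying out the remaining compositions — note $\|g\|_\infty\le\|f\|_\infty$ since each $P'_s$ and $\theta_y$ is a contraction on $\|\cdot\|_\infty$. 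Then \eref{4.18} gives $|\nabla_x^k I^{\sigma\xi}_f|\le C\|f\|_\infty t_1^{-\gamma_{k,0}}$, and after integrating against $\lambda_1^k e^{-\lambda_1 t}\,dt_1\cdots dt_k$ over the simplex $\{\sum t_i\le t\}$ one needs $\int_0^t t_1^{-\gamma_{k,0}}\,dt_1<\infty$, i.e. $\gamma_{k,0}<1$; if $\gamma_{k,0}\ge1$ this is exactly where one must be more careful — the standard fix (following Subsection 3.3 of \cite{Z1}) is to split the simplex according to which of $t_1,\dots,t_k,t-\sum t_i$ is largest, distribute the $k$ derivatives onto the operator with the largest time-gap (which is $\ge t/(k+1)$) rather than onto $P'_{t_1}$, and use the semigroup property $P'_{t_1} = P'_{t_1-s}P'_s$ together with Lemma \ref{l.5.7} and \eref{4.22} to move derivatives through the intermediate $P'$'s at the cost of lower-order $\nabla^j f$ terms absorbed by $\|f\|_\infty$ via \eref{4.18}. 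This yields a bound $|\nabla_x^k(k\text{-th term})|\le C^{k+1}\frac{(\lambda_1 t)^k}{k!}e^{-\lambda_1 t}(k+1)^{\gamma_k}\|f\|_\infty$ that is summable in $k$.

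Having established that $x\mapsto P_tf(x,\al)$ is $C^\infty$ with $|\nabla_x^k P_tf(x,\al)|\le C_{k,t}\|f\|_\infty$ for all $f$ with a constant independent of $f$, one invokes the integration-by-parts/duality machinery once more: using Lemma \ref{l.5.7} applied inside each $P'$-block (to write each $P'_s(\partial_j\cdot)$ as $\mathrm{div}\,Q-G$) one shows that in fact $P_tf(x,\al)=\EE\big(f(\cdot,\cdot)\,\Phi_{t,x,\al}\big)$ for a suitable integrable weight, so that for every multi-index $\beta$ there is an integrable random variable $\Psi_\beta$ with $\nabla_x^\beta P_tf(x,\al)=\EE(f(X_t,\al_t)\Psi_\beta)$; equivalently $|\xint{(\partial^\beta_y \phi)(y)\,P_tf(x,\cdot)\text{-kernel}}|\le C_\beta\|\phi\|_\infty$, which by the standard criterion (test against $\phi\in C_c^\infty(\RR^n)$, dualize) means the law of $X_t$ given $\al_t$ — and hence the law of $X_t$ — has a density in every Sobolev space $W^{\beta,1}$, therefore a $C^\infty$ density. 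The main obstacle is the second paragraph: controlling the derivatives uniformly in the number of jumps $k$, because a naive estimate puts all $k$ derivatives onto the first (possibly very short) time interval $t_1$ and produces a non-integrable singularity; the resolution requires the time-splitting argument and the semigroup-commutation identities of Lemma \ref{l.5.7} and \eref{4.22}, exactly as in Subsection 3.3 of \cite{Z1}, and one must check that the extra combinatorial factors $(k+1)^{\gamma}$ introduced thereby are still beaten by the $1/k!$ from the Poisson weights — which they are, giving an absolutely convergent series of $C^\infty$ functions of $x$.
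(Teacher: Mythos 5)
Your proposal is correct and follows essentially the same route as the paper: transfer smoothness from the truncated process via the compound-Poisson representation \eref{4.27}, identify the largest of the $k+1$ time gaps (which is $\ge t/(k+1)$), push the $x$-derivatives through to that block using \eref{4.22} together with the integration-by-parts formula of Lemma \ref{l.5.7}, apply the gradient bound \eref{4.18} there, and then sum the series against the $1/k!$ Poisson weights. You correctly flag the key pitfall (dumping all derivatives onto a possibly tiny $t_1$ gives a non-integrable singularity) and its resolution, which is exactly the paper's argument.
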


\begin{proof} In order to prove the smoothness of density for $X_t$. By \cite{N},
it suffices to show that for any $f\in C^{\infty}_b(\mathbb{R}^n)$, we have
$$|\mathbb{E}\nabla^m_{i_1,\ldots, i_m}f(X_t)|\leq C\|f\|_{\infty},~~~\forall m\geq 1, (i_1,\ldots, i_m)\in\{1,\ldots, n\}^m,$$
where $\nabla^m_{i_1,\ldots, i_m}=\frac{\partial^m}{\partial x_{i_1}\cdots \partial x_{i_m}}$ and $C$ depends on $t, x, (i_1,\ldots, i_m)$.
However, this can be easily obtained if we can establish the same gradient estimate as in (\ref{4.18}).

If we let $t_{k+1}:=t-(t_1+\cdots+t_k)$, then there exists at least one $j\in\{1,2,\ldots,k+1\}$ such that $t_j\geq\frac{t}{k+1}$.
Thus, by (\ref{4.22}) and (\ref{4.18}), we have
\begin{eqnarray*}
|\nabla \emph{I}^{~\textbf{y}}_{f}(t_1,\ldots, t_k, t, x, \al)|\leq \!\!\!\!\!\!\!\!&&C^{j-1}_{1}\|\nabla P'_{t_j}\cdots \theta_{y_{k-1}}P'_{t_k}
\theta_{y_k}P'_{t_{k+1}}f\|_{\infty}\\
\leq\!\!\!\!\!\!\!\!&& CC^{j-1}_{1}t^{-\gamma_{1,0}}_{j}\|P'_{t_{j+1}}\cdots \theta_{y_{k-1}}P'_{t_k}\theta_{y_k}P'_{t_{k+1}}f\|_{\infty}\\
\leq\!\!\!\!\!\!\!\!&& CC^{k}_{1}(\frac{t}{k+1})^{-\gamma_{1,0}}\|f\|_{\infty}.
\end{eqnarray*}
Hence, by (\ref{4.27}) we have
\begin{eqnarray}
|\nabla P_t f(x,\al)|\leq \!\!\!\!\!\!\!\!&&C\|f\|_{\infty}t^{-\gamma_{1,0}}
e^{- \lambda_1 t}\left[1+\sum^{\infty}_{k=1}\lambda^{k}_1 C^{k}_{1}(k+1)^{\gamma_{1,0}}\int_{\sum_{i=1}^kt_i\leq t}dt_1\cdots dt_k\right]\nonumber\\
=\!\!\!\!\!\!\!\!&&C\|f\|_{\infty}t^{-\gamma_{1,0}}e^{-\lambda_1 t}\left(\sum^{\infty}_{k=0}\lambda^{k}_1 C^k_1(k+1)^{\gamma_{1,0}}
\frac{t^k}{k!}\right)\nonumber\\
\leq\!\!\!\!\!\!\!\!&& C\|f\|_{\infty}t^{-\gamma_{1,0}}. \label{4.32}
\end{eqnarray}
Thus, we obtain (\ref{4.18}) with $k=1$ and $m=0$.

For $l, i=1,\ldots, n$, set $F^{0}_{l i}(x,\al):=1_{\{l=i\}}f(x,\al)$. Let us recursively define for $m=0,1,\ldots,k$,
$$
F^{(m+1)}_{li}(x,\al):=\sum^{n}_{j=1}Q^{ij}(t_{k+1-m}, x+y_k+\cdots+y_{k-m},\al; F^{(m)}_{lj})
$$
and
$$
R^{(m+1)}_{l}(x,\al):=\sum^{n}_{j=1}G^{j}(t_{k+1-m}, x+y_k+\cdots+y_{k-m},\al; F^{(m)}_{lj}),
$$
where $Q^{ij}$ and $G^{j}$ are defined by (\ref{Q}) and (\ref{G}). From these definitions, it easy to see that
$$
\|F^{(m+1)}_{li}\|_{\infty}\leq \sum^{n}_{j=1}\|F^{(m)}_{lj}\|_{\infty}\EE((K'_t)_{ij})
\leq C\sum^{n}_{j=1}\|F^{(m)}_{lj}\|_{\infty}\leq C n^{m}\|f\|_{\infty}
$$
and
$$
\|R^{(m+1)}_{l}\|_{\infty}\leq \sum^{n}_{j=1}\|F^{(m)}_{lj}\|_{\infty}\EE((K'_t)_{ij})\EE(\text{div}(K'_t)_{\cdot j})\leq C n^{m+1}\|f\|_{\infty}.
$$
By repeatedly using Lemma \ref{l.5.7}, we obtain
\begin{eqnarray*}
&&|\emph{I}^{~\textbf{y}}_{\partial_{l}f}(t_1,\ldots, t_k, t, x, \al)|\\
= \!\!\!\!\!\!\!\!&&\left|P'_{t_1}\cdots \theta_{y_{j-1}}P'_{t_j}\text{div} F^{(k+1-j)}_{l \cdot}(x,\al)-\sum^{k+1-j}_{m=1}
P'_{t_1}\cdots\theta_{y_{k-m}}P'_{t_{k+1-m}}R^{(m)}_{l}(x, \al)\right|\\
\leq \!\!\!\!\!\!\!\!&& C t^{-\gamma_{1,0}}_j \sum^{n}_{i=1}\|F^{(k+1-j)}_{li}\|_{\infty}+\sum^{k+1-j}_{m=1}\|R^{(m)}_{l}\|_{\infty}\\
\leq \!\!\!\!\!\!\!\!&& C (\frac{t}{k+1})^{-\gamma_{1,0}}\|f\|_{\infty}+C\|f\|_{\infty}.
\end{eqnarray*}
As estimating in (\ref{4.32}), we can obtain (\ref{4.18}) with $k=0$ and $m=1$. For the general $m$ and $k$,
the gradient estimate (\ref{4.18}) follows by similar calculations and induction method. The proof is complete.
\end{proof}

\medskip

\textbf{Acknowledgment}.
Xiaobin Sun is supported by the National Natural Science Foundation of China (11601196),
Natural Science Foundation of the Higher Education Institutions of Jiangsu Province (16KJB110006), Scientific Research Staring Foundation of Jiangsu Normal University (15XLR010). Yingchao Xie is supported by the National Natural Science Foundation of China (11771187). The Project is also funded by the Priority Academic Program Development of Jiangsu Higher Education Institutions.

\bibliographystyle{amsplain}

\end{document}